\newtheorem{thm}{Theorem}[section]
\newtheorem{theorem}{Theorem}[section]
\newtheorem{corollary}[theorem]{Corollary}
\newtheorem{proposition}[theorem]{Proposition}
\newtheorem{lemma}[theorem]{Lemma}
\newtheorem{definition}[theorem]{Definition}
\newtheorem{remark}[theorem]{Remark}
\newtheorem{example}[theorem]{Example}
\newtheorem{problem}[theorem]{Problem}
\newtheorem{thmx}{Theorem}
\def\irr#1{{\rm Irr}(#1)}
\def\irrr#1#2 {\irr {#1 \mid #2}}
\newcommand{\R}{\mathbb R}
\newcommand{\N}{\mathbb N}
\newcommand{\s}{\mathbb S}
\newcommand{\sfe}{{{\mathbb S}^{n}}}
\newcommand{\symm}{\mbox{\rm Sym}}
\newcommand{\M}{{\mathcal M}}
\begin{document}

\title[]{A constant rank theorem\\
for linear elliptic equations on the sphere\\
with applications to the mixed Christoffel problem}
\author[A. Colesanti, M. Focardi, P. Guan, P. Salani]{A. Colesanti, M. Focardi, P. Guan, P. Salani}
\address{Dipartimento di Matematica e Informatica ``U. Dini", Universit\`a degli Studi di Firenze}
\email{colesant@math.unifi.it}
\address{Dipartimento di Matematica e Informatica ``U. Dini", Universit\`a degli Studi di Firenze}
\email{matteo.focardi@unifi.it}
\address{Department of Math. \& Stats, McGill University}
\email{pengfei.guan@mcgill.ca}
\address{Dipartimento di Matematica e Informatica ``U. Dini", Universit\`a degli Studi di Firenze}
\email{paolo.salani@unifi.it}
\keywords{convex body; mixed are measures; mixed Christoffel problem; elliptic PDE; constant rank theorem}
\subjclass[2000]{52A20, 35J15} 
\begin{abstract} 
We study the mixed Christoffel problem for $C^{2,+}$ convex bodies providing sufficient conditions for its solution. Key to our approach is a constant rank theorem, following the approach developed in \cite{Guan-Ma-2003} to address the Christoffel problem, in order to ensure that the solution to a related second order linear PDE on the sphere is indeed geometric, that is, it is the support functions of a $C^{2,+}$ convex body. 
\end{abstract} 

\maketitle

\section{Introduction}

In this paper we are interested in {\em (mixed) area measures} of {\em convex bodies} (compact and convex subsets of $\R^{n+1}$, with non-empty interior). Let us briefly explain how these measures are generated. It is well known that the volume (that is, the Lebesgue measure, here denoted by $V$) of the Minkowski linear combination 
$$
t_1\Omega_1+\dots+t_m\Omega_m,\quad t_1,\dots,t_m\ge0,
$$
of $m$ convex bodies in $\R^{n+1}$ is an homogeneous polynomial of degree $(n+1)$ in the variables $t_i$'s:
$$
V(t_1\Omega_1+\dots+t_m\Omega_m)=\sum_{i_1,\dots,i_{n+1}=1}^m V(\Omega_{i_1},\dots,\Omega_{i_{n+1}})t_{i_1}\cdots t_{i_{n+1}}.
$$
The coefficients of this polynomial, $V(\Omega_{i_1},\dots,\Omega_{i_{n+1}})$, are the so-called {\em mixed volumes}. Hence, each mixed volume is a quantity depending on $(n+1)$ convex bodies:
$$
V=V(\Omega_1,\dots,\Omega_{n+1}).
$$
Mixed volumes are among the fundamental notions in convex geometry (see for instance \cite{Schneider}). They include {\em quermassintegrals} as special cases: for a fixed convex body $\Omega$, mixed volumes of the form
$$
V(\underbrace{\Omega,\dots,\Omega}_{\mbox{\tiny $k$-times}},\underbrace{B_1,\dots,B_1}_{\mbox{\tiny $(n+1-k)$-times}})
$$
where $B_1$ is the unit ball of $\R^{n+1}$ and $k\in\{1,\dots,n+1\}$, are called {\em quermassintegrals of $\Omega$} (or, with a different normalization, its {\em intrinsic volumes}). 

Mixed area measures arise naturally in integral representations of mixed volumes. Given $n$ convex bodies $\Omega_1,\dots,\Omega_n$ in $\R^{n+1}$, there exists a finite Borel measure $S=S(\Omega_1,\dots,\Omega_n;\cdot)$, defined on the unit sphere $\sfe$ of $\R^{n+1}$, such that, for any other convex body $\Omega$,  
$$
V(\Omega_1,\dots,\Omega_n,\Omega)=\int_{\sfe}u_{\Omega}(x)dS(\Omega_1,\dots,\Omega_n;x),
$$
where $u$ is the {\em support function} of $\Omega$. We recall that the support function of $\Omega$ (with respect to the origin) is the function $u_\Omega: \ \sfe\to\R$ defined by
$$
u_\Omega(x)=\sup_{z\in\Omega}\langle x,z\rangle,
$$
where $\langle\cdot,\cdot\rangle$ is the standard scalar product in $\mathbb{R}^{n+1}$. The measure $S=S(\Omega_1,\dots,\Omega_n;\cdot)$ is called the {\em mixed area measure} of $\Omega_1,\dots,\Omega_n$. Due to translation invariance of mixed volumes, area measures verify the following condition:
\begin{equation}\label{comp-cond 0-0}
\int_{\s^n} x_j dS(\Omega_1,\dots,\Omega_n;x)=0, \quad \forall j=1,\cdots, n+1
\end{equation}
(here $x_1,\dots,x_{+1}$ are the coordinates in $\R^{n+1}$).

A special class of mixed area measures related to a convex body $\Omega$ are its {\em area measures}, $S_k=S_k(\Omega;\cdot)$, $k=1,\dots,n$, defined by
$$
S_k(\Omega;\cdot)=S(\underbrace{\Omega,\dots,\Omega}_{\mbox{\tiny $k$-times}},\underbrace{B_1,\dots,B_1}_{\mbox{\tiny $(n-k)$-times}};\cdot).
$$

In particular, the measure $S_n(\Omega;\cdot)$ has the following simple geometric interpretation: for every Borel set $\eta\subset\sfe$,
$$
S_n(\Omega;\eta)={\mathcal H}^{n-1}(\nu_\Omega^{-1}(\eta)),
$$
where $\mathcal{H}^{n-1}$ is the $(n-1)$-dimensional Hausdorff measure, and $\nu_\Omega$ is the Gauss map of $\Omega$. 

The celebrated {\em Minkowski problem} requires to find a convex body $\Omega$ with prescribed area measure $S_n(\Omega;\cdot)$. This problem is well-posed: simple necessary and sufficient conditions on the given measure, in order to have existence and uniqueness (up to translations) of the solution, are known, and a satisfactory regularity theory has now been developed, thanks to the contributions of many authors. The literature concerning this problem is vast; the interested reader is referred, for instance, to Chapter 7 in \cite{Schneider}, for an exhaustive account on this subject.

On the other hand, the problem of finding a convex body with given area measure of order one, that is $S_1(\Omega;\cdot)$, called the {\em Christoffel problem}, appears to be much more delicate.  Necessary and sufficient conditions for the existence of a solution were established by Firey \cite{Firey} and Berg \cite{Berg} (see also Chapter 8 in \cite{Schneider} and \cite{LWW}), but they are in general not easy to be checked in practice. The situation is even more complicated for the {\em Christoffel-Minkowski problem}, concerning the area measures of intermediate order, $S_k(\Omega;\cdot)$, $2\le k\le (n-1)$. 

\medskip

\subsection{The passage to PDE's} A general observation, which is crucial for the present paper, is that, under regularity assumptions on the involved convex bodies, mixed area measures admit a density with respect to the uniform measure on $\sfe$, and such densities can be explicitly written in terms of the derivatives of the support function. This allows, for instance, to reduce the Minkowski and the Christoffel problems to find a solution of a partial differential equation on the unit sphere, within the class of support functions of convex bodies.

We say that a convex body $\Omega$ is of class $C^{2,+}$ if its boundary is of class $C^2$, and the Gauss curvature is positive at every point of $\partial\Omega$. If $\Omega$ is of class $C^{2,+}$, then its support function $u$ belongs to $C^2(\sfe)$. We denote by $u_i$, $u_{ij}$, $i,j=1,\dots,n$, the first and second covariant derivatives of $u$, with respect to a local orthonormal frame on $\sfe$. 

\begin{definition}\label{W} For a convex body $\Omega\subset \mathbb R^{n+1}$, of class $C^{2,+}$, with support function $u$, we define the inverse Weingarten form of $\Omega$ as
\begin{equation}\label{w-def} 
W_\Omega(x)=(u_{ij}(x)+u(x)\delta_{ij})_{i,j=1,\dots,n},\quad x\in\sfe.
\end{equation} 
\end{definition}

Note that $W$ is independent of the choice of the point with respect to which the support function is defined (that is, is invariant with respect to translations of $\Omega$). Moreover, $\Omega$ is of class $C^{2,+}$ if and only if its inverse Weingarten form is positive definite at every point of $\sfe$; see, for instance, Chapter 2 in \cite{Schneider}.

\medskip

We will denote by $\mathscr{D}$ the mixed discriminant operator, that is, the operator generated by the polarization of the determinant with respect to the usual addition of matrices. 
$\mathscr{D}$ acts on $n$-tuples of $n\times n$ symmetric matrices. With this notation, if $\Omega_1,\dots,\Omega_n$ are convex bodies of class $C^{2,+}$, and $W_i$ is the inverse Weingarten form of $\Omega_i$, $i=1,\dots,n$, then their mixed area measure is absolutely continuous with respect to the $(n-1)$-dimensional Hausdorff measure restricted to $\sfe$, and its density is
\begin{equation}\label{density of mixed area measures}
dS(\Omega_1,\dots,\Omega_n;x)=\mathscr{D}(W_1,\cdots, W_n)dx.
\end{equation}
In particular
$$
dS_n(\Omega;x)=\det(W_\Omega(x))dx
$$
for every convex body $\Omega$ of class $C^{2,+}$. 

We now observe that the support function of the unit ball $B_1$ of $\R^{n+1}$ (restricted to $\sfe$) is the constant function $1$, and consequently 
$$
W_{B_1}\equiv I_n,
$$
where $I_n$ is the identity matrix of order $n$. Hence 
$$
dS_k(\Omega;x)=\mathscr{D}(\underbrace{W_\Omega(x),\dots, W_\Omega(x)}_{\mbox{\tiny $k$-times}},\underbrace{I_n,\dots,I_n}_{\mbox{\tiny $(n-k)$-times}})dx=\sigma_k(W_\Omega(x))dx,
$$
where $\sigma_k(W_\Omega)$ denotes the $k$-th principal symmetric function of the eigenvalues of $W_\Omega$.

As a consequence, the Minkowski problem in the smooth setting can be reduced to solve a partial differential equation on $\sfe$: if $f$ is the density of the given measure, the problem amounts to find a solution $u$ to the Monge-Amp\`ere equation
$$
\det(u_{ij}(x)+u\delta_{ij}(x))=f(x),
$$
such that its associated inverse Weingarten form is positive definite on $\sfe$:
\begin{equation}\label{geometric solution}
W(x)=(u_{ij}(x)+u\delta_{ij}(x))>0\quad\forall\, x\in\sfe.
\end{equation}
The latter condition ensures that $u$ is the support function of a convex body (of class $C^{2,+}$). We will refer to this condition saying that $u$ is a {\em geometric solution} of the relevant equation. 

Similarly, the Christoffel problem is equivalent to find a geometric solution of the linear equation
\begin{equation}\label{CF-eq} 
\mbox{tr}(W)=\Delta_{\mathbb S^n} u+nu =f,
\end{equation}
where $\Delta_{\sfe}$ denotes the spherical Laplacian. More generally, for an arbitrary $k\in\{1,\dots,n\}$, the Christoffel-Minkowski problem is reduced to find a geometric solution of the following {\em Hessian equation}
\begin{equation}\label{CM-eq}
\sigma_k(u_{ij}+u\delta_{ij})=f.
\end{equation}
Here, given a symmetric matrix $W$, $\sigma_k(W)$ denotes the $k$-th elementary symmetric function of the eigenvalues of $W$. In all cases, in view of \eqref{comp-cond 0-0}, the compatibility condition
\begin{equation}\label{eq condition in GM}
\int_{\mathbb S^n}x_j f(x)dx=0,\quad\forall\ j=1,\dots,n+1,    
\end{equation}
is a necessary condition on $f$. This condition is verified, for instance, when $f$ (or equivalently $\mu$) is even. In the case of Minkowski problem (in the smooth setting), \eqref{eq condition in GM} is also sufficient. On the other hand it is known that such condition is not sufficient for the existence of a solution of the Christoffel problem (see \cite[Section 8.3]{Schneider}). 

\subsection{A sufficient condition for the Christoffel problem} Existence of solution to \eqref{CF-eq} follows from the standard theory of elliptic PDE's. The main question is when such solution $u$ is geometric, i.e. $u$ is a support function of convex body. A progress was made in the paper \cite{Guan-Ma-2003}, where the authors established the following {\em constant rank} type theorem, for solutions of equation \eqref{CF-eq}, under a fairly simple condition on $f$ (in the same paper they proved a similar result for equation \eqref{CM-eq}). 

\begin{thmx}\label{GM thm}
Let $f\in C^{1,1}(\sfe)$ be a strictly positive function satisfying (\ref{eq condition in GM}).
Assume moreover that
\begin{equation}\label{GM condition}
\mbox{the $1$-homogeneous extension of $\dfrac 1f$ to $\R^{n+1}$, is convex.}    
\end{equation} 
If $u$ is a solution to \eqref{CF-eq} with 
$$
W(x)=(u_{ij}(x)+u(x)\delta_{ij})\ge 0\quad\forall\, x\in\sfe,
$$ 
then $W(x)>0$ for every $x\in\mathbb S^n$. 
\end{thmx}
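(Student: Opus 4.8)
The plan is to follow the strong-maximum-principle strategy for constant rank theorems pioneered by Caffarelli--Friedman and developed in \cite{Guan-Ma-2003}. Suppose $W \ge 0$ everywhere but $W$ fails to be positive definite somewhere; let $\ell = \min_{x \in \sfe} \operatorname{rank} W(x)$ and let $x_0$ be a point where this minimum is attained, so $0 \le \ell \le n-1$. Near $x_0$ choose a convenient local orthonormal frame so that $W(x_0)$ is diagonal with the first $\ell$ eigenvalues positive and the remaining $n-\ell$ equal to zero; write $G$ for the ``good'' index set $\{1,\dots,\ell\}$ and $B$ for the ``bad'' set $\{\ell+1,\dots,n\}$. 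The goal is to produce a nonnegative function $\phi$, built from $W$, that vanishes at $x_0$ and satisfies a differential inequality of the form $L\phi \le C(\phi + |\nabla \phi|)$ for a second-order elliptic operator $L$ with no zeroth order term; the strong maximum principle then forces $\phi \equiv 0$ in a neighborhood, hence (by connectedness of $\sfe$) the rank is globally constant equal to $\ell$, and a separate argument rules out $\ell < n$.

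For the test function I would take $\phi = \sigma_{\ell+1}(W) + q(W)$, where $q$ is a suitable quotient like $\sigma_{\ell+2}(W)/\sigma_{\ell+1}(W)$ inserted to control the lower-order terms exactly as in \cite{Guan-Ma-2003} (or, following their later formulation, work directly with the sum over $(\ell+1)\times(\ell+1)$ principal minors). The first step is to differentiate the equation $\operatorname{tr}(W) = \Delta_{\sfe} u + n u = f$: since this is \emph{linear}, differentiating once and twice in the covariant directions $e_i$ gives $\operatorname{tr}(W_{,i}) = f_i$ and $\operatorname{tr}(W_{,ij}) = f_{ij}$, which is a major simplification compared with the Monge--Amp\`ere or general Hessian case. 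The second step is the linear-algebra core: at a point near $x_0$, expand $\sigma_{\ell+1}(W)$ and its derivatives in terms of the entries $W_{ij}$, separating the $G$ and $B$ blocks; because $W$ has rank $\ge \ell$ and the $G$-block stays uniformly positive definite, the bad $B$-block entries $W_{ab}$ (for $a,b \in B$) are themselves controlled by $\phi$, and one derives that the first covariant derivatives $W_{ab,i}$ of the bad block are likewise $O(\phi^{1/2})$ or absorbable. The third step is to compute $L\phi$ with $L = W^{ij}\nabla_i\nabla_j$ (the linearized operator, here simply $\sum_{i,j} W^{ij}\partial_i\partial_j$ on the good directions, suitably interpreted), collect all terms, and show that the ``dangerous'' positive second-order terms organize into an expression whose sign is dictated precisely by the \emph{convexity of the $1$-homogeneous extension of $1/f$}. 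This is where hypothesis \eqref{GM condition} enters: the Hessian of that extension appears as a quadratic form acting on the gradient data of $u$, and its nonnegativity is exactly what is needed to conclude $L\phi \le C(\phi + |\nabla\phi|)$.

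I expect the main obstacle to be precisely this last matching: carefully bookkeeping the third-order terms $u_{ijk}$ (equivalently $W_{ij,k}$) that survive after the algebraic reductions, and showing that the residual quadratic form is, term by term, a multiple of the Hessian of the homogeneous extension of $1/f$ restricted to the relevant subspace. Two ancillary technical points also require care: first, the passage between covariant derivatives on $\sfe$ and Euclidean derivatives of homogeneous extensions, using the standard identity relating $\Delta_{\sfe} u + n u$ to the Euclidean Laplacian of the $1$-homogeneous extension of $u$, so that \eqref{GM condition} can be applied in the form ``Hessian of the extension of $1/f$ is $\ge 0$''; and second, after establishing constant rank $\ell$, upgrading to $\ell = n$, which follows because a geometrically admissible $W$ of constant rank $\ell < n$ would force $u$ to be the support function of a body of lower dimension, contradicting the compatibility condition \eqref{eq condition in GM} together with strict positivity of $f$ (indeed, $f>0$ means the trace of $W$ is positive, so $\ell \ge 1$, and a finer argument using that $\operatorname{tr}(W)=f$ has nonvanishing integral against no $x_j$ excludes all $\ell<n$; alternatively one invokes that on a sphere a nonnegative matrix field of constant rank $<n$ with this structure cannot solve \eqref{CF-eq} for positive $f$). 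The heart of the proof, and the only genuinely delicate computation, is the constant-rank differential inequality; everything else is either the linear structure of \eqref{CF-eq} working in our favor or standard facts recalled in the excerpt.
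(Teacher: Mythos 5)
Your overall strategy is the right one and matches \cite{Guan-Ma-2003}: pick a point of minimal rank, form the test function $\phi=\sigma_{\ell+1}(W)+\sigma_{\ell+2}(W)/\sigma_{\ell+1}(W)$, differentiate the equation, establish $L\phi\le C(\phi+|\nabla\phi|)$, and invoke the strong minimum principle. Note also that the paper states Theorem \ref{GM thm} with attribution to \cite{Guan-Ma-2003} rather than proving it directly; however, the paper's Theorem \ref{main theorem}, specialized to $A=I_n$, is exactly this statement, and its proof follows your outline.

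That said, there are two genuine problems. First, you misidentify the elliptic operator. You take $L=W^{ij}\nabla_i\nabla_j$ ``the linearized operator.'' That is the linearization of a Monge--Amp\`ere type operator ($\log\det$), relevant to the Minkowski problem; it is \emph{not} the linearization of the linear Christoffel equation $\mathrm{tr}(W)=f$. The linearization of the trace is the identity, so the correct operator is $L=\Delta_{\sfe}$ (and, in the more general equation \eqref{the equation}, $L=\sum_{\alpha,\beta}a_{\alpha\beta}\nabla_\alpha\nabla_\beta$, which reduces to $\Delta_{\sfe}$ when $a_{\alpha\beta}=\delta_{\alpha\beta}$). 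Your $L$ is not even defined at degenerate points of $W$ --- precisely the points you must analyze --- whereas $\Delta_{\sfe}$ is uniformly elliptic, which is essential both for the strong minimum principle and for the cancellations: one differentiates the equation twice in a bad direction $i$ to get $\mathrm{tr}(W_{,ii})=f_{ii}$, and these identities feed into the estimate for $\sum_{\alpha}\phi_{\alpha\alpha}$, not for $W^{\alpha\beta}\phi_{\alpha\beta}$.

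Second, your step from constant rank $\ell<n$ to full rank is not actually a proof: ``a finer argument \dots excludes all $\ell<n$'' is not substantiated, and your alternative (a lower-dimensional body ``cannot solve \eqref{CF-eq}'') is not justified. The paper has a clean argument, which you should use: if the rank is constantly $l<n$, then $\sigma_{l+1}(W)\equiv 0$ while $\sigma_l(W)\ge\delta_0>0$ on $\sfe$. Translating so the Steiner point is at the origin gives $u\ge 0$, and the Minkowski integral identity yields
\begin{equation*}
0=C_{n,l}\int_{\sfe}\sigma_{l+1}(W)\,dx=\int_{\sfe}u\,\sigma_l(W)\,dx\ge\delta_0\int_{\sfe}u\,dx,
\end{equation*}
so $u\equiv 0$, a contradiction. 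A smaller inaccuracy: you say hypothesis \eqref{GM condition} enters as ``a quadratic form acting on the gradient data of $u$.'' In fact the residual term that must be nonnegative involves only derivatives of $f$, evaluated along the bad directions determined by $W$; for $n=2$ it is $f^2\bigl[(1/f)_{11}+1/f\bigr]$, i.e.\ the second Euclidean derivative of the $1$-homogeneous extension of $1/f$ in the bad direction $e_1$, which is $\ge 0$ exactly by \eqref{GM condition}.
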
  

Using the previous result and standard continuity argument, existence of a geometric solution to \eqref{CF-eq} can be proved if there is a homotopic path, described by a real parameter $t\in[0,1]$, connecting $f$ and, say, the constant function $1$, such that the assumption (\ref{GM condition}) is valid for every $t$. For that purpose, Andrews and Ma used a curvature flow in an unpublished note \cite{AM}; see also a recent work by Bryan-Ivaki-Scheuer \cite{BIS}. A direct homotopic path was constructed by Sheng-Trudinger-Wang in \cite{STW}. Therefore, as a corollary of Theorem \ref{GM thm}, conditions \eqref{comp-cond 0-0} (that is \eqref{eq condition in GM}) and \eqref{GM condition} are sufficient for the existence of a solution of the Christoffel problem. Note that \eqref{comp-cond 0-0} is also a necessary condition.

\subsection{The mixed Christoffel problem} In this paper we study the following problem.

\begin{problem}[{\bf The mixed Christoffel problem}] Let $\Omega_1,\dots,\Omega_{n-1}$, be given convex bodies in $\R^{n+1}$, and let $\mu$ be a Borel measure on $\sfe$. Find a convex body $\Omega$ in $\R^{n+1}$, such that
\begin{equation}\label{MC problem}
S(\Omega_1,\dots\Omega_{n-1},\Omega;\cdot)=\mu(\cdot).
\end{equation}
\end{problem}

This problem is mentioned in the recent paper \cite{Huang-Yang-Zhang-2025}. Moreover, necessary and sufficient conditions for the existence of a solution in the rotation invariant case have been recently established in \cite{Brauner-Hofstatter-Ortega1, Brauner-Hofstatter-Ortega2, Mussnig-Ulivelli}.

If $\Omega_i=B_1$ for every $i=1,\dots,(n-1)$, then the problem reduces to the Christoffel problem. When $L$ is a given convex $C^{2,+}$ set (different from $B_1$) and $\Omega_i=L$ for every $i=1,\dots,(n-1)$, then the problem can be interpreted as an {\em anisotropic Christoffel problem}, where $L$ plays the role of the unit ball in place of $B_1$. 

We first note that, due to \eqref{comp-cond 0-0}, the following is a necessary condition for the existence of a solution:
\begin{equation}\label{comp-cond 0}
\int_{\s^n} x_j d\mu(x)=0, \quad \forall j=1,\cdots, n+1.
\end{equation}

We will treat the mixed Christoffel problem in the smooth setting: we will assume that $\Omega_1,\cdots, \Omega_{n-1}$ are convex bodies of class $C^{2,+}$, and that the measure $\mu$ has a smooth density $f$ with respect to the $n$-dimensional Hausdorff measure restricted to $\sfe$. In particular, condition \eqref{comp-cond 0} is equivalently expressed by \eqref{eq condition in GM}.

We establish a sufficient condition involving $\Omega_1,\dots,\Omega_{n-1}$ and $f$, for the existence of a solution to the mixed Christoffel problem. 

In order to state our existence result, we need to recall the notion of mixed co-factor matrix. Let $M_1,\dots,M_{n-1}$ be $n\times n$ symmetric matrices. For any further $n\times n$ symmetric matrix $M=(m_{ij})_{i,j=1\dots,n}$, the mixed discriminant
$$
\mathscr{D}(M_1,\dots,M_{n-1},M)
$$
is linear with respect to $M$. Therefore it can be written in the form
$$
\mathscr{D}(M_1,\dots,M_{n-1},M)=\sum_{i,j=1}^n c_{ij}m_{ij},
$$
for suitable and uniquely determined coefficients $c_{ij}\in\R$. The matrix
$$
C(M_1,\dots,M_{n-1})=(c_{ij})_{i,j=1,\dots,n},
$$
is called the mixed co-factor matrix of $M_1,\dots, M_{n-1}$. Alternatively, the coefficients $c_{ij}$ can be defined as
\begin{equation}\label{cmc}
c_{ij}=\frac{\partial \mathscr{D}(M_1,\dots,M_{n-1},M)}{\partial m_{ij}},\quad\forall\, i,j=1,\dots,n.
\end{equation}

By \eqref{density of mixed area measures}, condition \eqref{MC problem} is equivalent to
\begin{equation}\label{Prob-C}
\mathscr{D}(W_1,\cdots, W_{n-1}, W)=f, \quad \mbox{on $\s^n$},
\end{equation}
and the latter equation can be written in the form:
\begin{equation}\label{the equation}
\sum_{\alpha,\beta=1}^n a_{\alpha\beta}(x)(u_{\alpha\beta}(x)+\delta_{\alpha\beta}u(x))=f(x)
\end{equation}
(once that a local orthonormal coordinate system has been fixed on $\sfe$), where 
$$
(a_{\alpha\beta})_{\alpha,\beta=1,\dots,n}
$$ 
is the mixed co-factor matrix of $W_1,\cdots, W_{n-1}$. Equation \eqref{the equation} is a linear second order equation in the unknown function $u$. The fact that $\Omega_i$ is of class $C^{2,+}$ for every $i$, implies that this equation is uniformly elliptic, which is equivalent to
\begin{equation}\label{ellipticity}
(a_{\alpha\beta}(x))>0,\quad\forall\ x\in\sfe.
\end{equation}

The previous conditions guarantee the existence of a solution to (\ref{the equation}). Moreover, the solution is unique modulus the addition of linear functions (see, for instance, \cite{GMTZ}). We also observe that if $\partial \Omega_1,\cdots, \partial \Omega_{n-1}\in C^{4}$, then $a_{\alpha\beta}\in C^{2}$. Under these assumptions, the admissible solution $u$ of (\ref{the equation}) belongs to $C^{4,\gamma}$ (but note that $C^{3,1}$ would be sufficient for us to carry out convexity estimates in later sections).

We look for a solution $u$ of \eqref{the equation} such that condition \eqref{geometric solution} holds. As in \cite{Guan-Ma-2003}, we will prove a constant rank theorem for \eqref{the equation}. In order to state our result, we need the following remark. 

\begin{remark} Assume that $A$ is a symmetric $(2,0)$ tensor defined on an open subset $D$ of $\sfe$, and that $u\in C^2(D)$. For a fixed local coordinate system on $\sfe$, let 
$$
A=(a_{ij})_{i,j=1,\dots,n},\quad W=(w_{ij})_{i,j=1,\dots,n}=(u_{ij}+u\delta_{ij})_{i,j=1,\dots,n}.
$$
Then the quantity
$$
{\rm tr}(AW)=\sum_{i,j=1}^n a_{ij}w_{ij},
$$
is independent of the local coordinate system chosen on $\sfe$. 
\end{remark}

In the following statement $(a^{\alpha\beta})_{\alpha,\beta=1,\dots,n}$ denotes the inverse matrix of $(a_{\alpha\beta})_{\alpha,\beta=1,\dots,n}$.

\begin{thm}\label{main theorem} Let $A(x)=(a_{ij}(x))\in C^{2,\gamma}(\s^n)$ be a positive $(2,0)$-tensor in an open and connected set $D\subset \s^n$ and let $f\in C^{2, \gamma}(D)$ for some $\gamma>0$, with $f>0$ on $\s^n$. Suppose that, for every $1\le l\le n-1$, and every $q=1,\dots,n$,
\begin{equation}\label{cond-l} 
\left(\delta_{ij}+ \frac{\nabla_i \nabla_j (\frac{a_{qq}}{f})}{\frac{a_{qq}}{f}}+\frac12\frac{\nabla_ia_{qq}\nabla_ja_{qq}}{a^2_{qq}}-\frac12\sum_{\alpha,\beta=1}^l \frac{a^{\alpha\beta}\nabla_ia_{q\alpha}\nabla_ja_{q\beta}}{a_{qq}}\right)_{i,j=1,\dots,n}\ge 0.
\end{equation}
Let $u\in C^2(D)$ be a solution of equation \eqref{the equation} in $D$. If 
\[
W=(u_{ij}+u\delta_{ij})_{i,j=1,\dots,n}\ge 0
\] 
in $D$, then $W$ has constant rank in $D$. Moreover, if $D=\s^n$ (and $n\ge2$), then $W$ has maximal rank, that is $W>0$ in $D$.

In the case $n=2$, condition (\ref{cond-l}) can be written as follows: for every $q=1,2$,
\begin{equation}\label{cond-1 2} 
\left(\frac{a_{qq}}f\delta_{ij}+ \nabla_i\nabla_j\left(\frac{a_{qq}}{f}\right)\right)_{i,j=1,2}\ge 0.
\end{equation}
\end{thm}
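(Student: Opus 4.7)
The plan is to adapt the microscopic constant-rank strategy of Caffarelli-Guan-Ma, used by Guan-Ma in the Christoffel case \cite{Guan-Ma-2003}. Let $\ell=\min_D\rank W$, and suppose for contradiction that $\ell<n$; note that $\ell=0$ is immediately ruled out because at a point where $W=0$ the equation \eqref{the equation} would give $0=f>0$, so $1\le\ell\le n-1$. Fix $x_0\in D$ with $\rank W(x_0)=\ell$. The set $Z=\{x\in D:\rank W(x)\le \ell\}$ is the common zero set of all $(\ell+1)\times(\ell+1)$ principal minors of $W$, hence closed in $D$; by connectedness, to get the constant-rank statement it suffices to show $Z$ is also open at $x_0$.

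As a test function I would take
\[
\phi(x)=\sigma_{\ell+1}(W(x)),
\]
the sum of all $(\ell+1)$-principal minors of $W$: then $\phi\ge 0$ on $D$ and $\{\phi=0\}=Z$ locally. The aim is to establish, in a neighborhood $U$ of $x_0$, a differential inequality
\[
\sum_{\alpha,\beta=1}^n a^{\alpha\beta}(x)\,\nabla_\alpha\nabla_\beta \phi(x)\;\le\; C\bigl(\phi(x)+|\nabla\phi(x)|\bigr),\qquad x\in U,
\]
the operator on the left being uniformly elliptic by \eqref{ellipticity}. The strong minimum principle then forces $\phi\equiv 0$ on $U$, so $U\subset Z$ and $Z$ is open.

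The local computation is the heart of the proof and the principal obstacle. At $x_0$ fix a local orthonormal frame diagonalizing $W(x_0)$ and partition the indices into a good set $G$ of $\ell$ indices (eigenvalues uniformly positive near $x_0$) and a bad set $B$ of $n-\ell$ indices (eigenvalues vanishing at $x_0$). Standard Cauchy-Schwarz-type estimates show that, up to $O(\phi)+O(|\nabla\phi|)$, only the diagonal bad entries $w_{qq}$, $q\in B$, enter $\phi$ and its first two derivatives, so modulo admissible terms one works with $\det(W_G)\sum_{q\in B}w_{qq}$. Computing $L\phi$ with $L=a^{\alpha\beta}\nabla_\alpha\nabla_\beta$ then requires differentiating \eqref{the equation} twice in each bad direction $q$; the Ricci identity on the sphere, $u_{ijk}-u_{ikj}=u_j\delta_{ik}-u_k\delta_{ij}$, shows that $W$ is Codazzi and is absorbed into the $u\delta_{ij}$ part of $W$, contributing the $\delta_{ij}$ summand in \eqref{cond-l}. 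After rearrangement, the only contribution to $L\phi$ that is not manifestly controlled by $\phi$ and $|\nabla\phi|$ is, for each $q\in B$, the quadratic form in the bad-direction second derivatives of $u$ with coefficient matrix
\[
\delta_{ij}+\frac{\nabla_i\nabla_j(a_{qq}/f)}{a_{qq}/f}+\tfrac12\frac{\nabla_i a_{qq}\nabla_j a_{qq}}{a_{qq}^2}-\tfrac12\sum_{\alpha,\beta\in G}\frac{a^{\alpha\beta}\nabla_i a_{q\alpha}\nabla_j a_{q\beta}}{a_{qq}},
\]
whose three non-trivial summands arise, respectively, from a double differentiation of $\log(a_{qq}/f)$ after contracting \eqref{the equation} in the $qq$-direction, from completion of squares in the first derivatives of $a_{qq}$, and from the off-diagonal couplings $a_{q\alpha}$ with $\alpha\in G$. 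This matrix is exactly the one in \eqref{cond-l} with $l=|G|=\ell$; since $\ell$ is not known a priori, the hypothesis must be imposed for every admissible $l\in\{1,\dots,n-1\}$. Under \eqref{cond-l} the dangerous contribution to $L\phi$ is non-negative, and the desired inequality closes.

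It remains to upgrade "constant rank" to "maximal rank" when $D=\s^n$ and $n\ge 2$. If $\rank W\equiv \ell<n$ on the whole sphere, the kernel of $W$ is a smooth distribution of rank $n-\ell\ge 1$; via the support-function dictionary this forces $u$ to be the support function of a convex body contained in a proper affine subspace of $\R^{n+1}$, whose support function is not of class $C^2$ on all of $\s^n$, contradicting the regularity of the solution. This forces $\ell=n$, i.e.\ $W>0$ on $\s^n$. Finally, in the case $n=2$ only $\ell=1$ is admissible and $|G|=1$, so the $\alpha,\beta$-sum in \eqref{cond-l} collapses to a single rank-one term that cancels against the preceding completion-of-squares summand; the matrix then reduces, up to the positive factor $a_{qq}/f$, to $(a_{qq}/f)\delta_{ij}+\nabla_i\nabla_j(a_{qq}/f)$, which is \eqref{cond-1 2}.
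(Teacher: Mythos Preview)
Your overall architecture --- a constant-rank test function, a differential inequality closed by the strong minimum principle --- matches the paper, and for $n=2$ your outline is essentially what Section~\ref{section n=2} does. For $n\ge 3$, however, there is a genuine gap.

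The heart of the matter is how \eqref{cond-l} actually enters. In the paper's computation (Section~\ref{section n general}), for each \emph{bad} index $i\in B$ one isolates a quantity
\[
\Gamma_i=\sum_{\alpha,\beta\in G} a_{\alpha\beta ii}w_{\alpha\beta}+2\sum_{\alpha,\beta\in G}\Bigl(a_{\alpha\beta i}w_{\alpha\beta i}+\sum_{j,k\in G}a_{\alpha\beta}w^{jk}w_{\alpha ki}w_{\beta ji}\Bigr),
\]
which depends on the whole \emph{good} block $W|_G$ and its $e_i$-derivative, under the constraint $\langle A|_G,W|_G\rangle\sim 1$ from the equation. This is not yet the matrix in \eqref{cond-l}; getting there requires two further steps. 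First one minimizes over the derivative block (a convex quadratic), obtaining a reduced function $\tilde\Gamma_i(W)=\langle A'',W\rangle+\tfrac12\langle A',W\rangle^2-\tfrac12\langle A',WA'A^{-1}\rangle$. Then --- and this is the paper's key technical lemma, Lemma~\ref{rank-reduction} --- one shows that $\tilde\Gamma_i$, restricted to $\{W\ge0,\ \langle A,W\rangle=1\}$, attains its minimum at a \emph{rank-one} matrix. Only after this rank reduction, rotating so that $W$ is supported on a single good direction $q$, does the scalar condition in \eqref{cond-l} emerge. Your sketch skips this minimization/rank-reduction step entirely and attributes the three non-trivial terms in \eqref{cond-l} to ``double differentiation of $\log(a_{qq}/f)$'', ``completion of squares'' and ``off-diagonal couplings'', which is not how they arise. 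Relatedly, the index $q$ in \eqref{cond-l} is the surviving \emph{good} direction after rank reduction, not a bad index; your assignment $q\in B$ reverses the roles.

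Two secondary points. The paper uses the Bian--Guan test function $\phi=\sigma_{l+1}(W)+\sigma_{l+2}(W)/\sigma_{l+1}(W)$ for $n\ge3$ rather than $\sigma_{l+1}$ alone, precisely because the second-derivative estimates \eqref{new3-1}--\eqref{control1} from \cite{BG} are what make the inequality close. And the full-rank step on $D=\s^n$ is done via the Minkowski identity $\int_{\s^n}\sigma_{l+1}(W)\,dx=C_{n,l}\int_{\s^n}u\,\sigma_l(W)\,dx$: if $\sigma_{l+1}\equiv0$ and $\sigma_l>0$, translating so $u\ge0$ forces $u\equiv0$, contradicting $f>0$. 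Your argument that constant rank $<n$ forces the body into a proper affine subspace with non-$C^2$ support function is not justified as stated (and note that a point has a smooth support function).
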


Thank to \eqref{cond-1 2}, for $n=2$ a simplified version of the previous result can be achieved. We need to introduce a definition, first. For an arbitrary $k\in\N$, we denote by $\symm(k)$ the space of square and symmetric matrices of order $k$.

\begin{definition} Let $n,k\in\N$, and let
$$
M\colon\R^{n+1}\to\symm(k).
$$
We say that the function $M$ is convex if
$$
M((1-t)x_0+tx_1)\le(1-t)M(x_0)+tM(x_1),
$$
for every $x_0, x_1\in\R^{n+1}$ and for every $t\in[0,1]$. The above inequality has to be intended in the matricial sense: the difference between the right hand side and the left hand side is a positive semi-definite matrix. 
\end{definition}

\begin{thm}\label{main theorem n=2 rephrased} Let $A(x)=(a_{ij}(x))_{i,j=1,2}\in C^{2,\gamma}(D)$ be a positive $(2,0)$-tensor defined in an open connected set $D\subset\s^2$, and let $f\in C^{2, \gamma}(D)$ for some $\gamma>0$, with $f>0$ on $D$. Let $M$ be the $1$-homogeneous extension of the tensor
$$
\frac{A}{f}
$$
to the cone 
$$
C=\left\{x\in\R^3\colon\frac{x}{|x|}\in D\right\}.
$$
Assume that  
\begin{equation}\label{the condition}
M\mbox{ is locally convex in $C$}.
\end{equation}
Let $u\in C^2(\s^2)$ be a solution of equation \eqref{the equation}. If  
\[
W=(u_{ij}+u\delta_{ij})_{i,j=1,2}\ge 0
\] 
in $D$, then $W$ has constant rank in $D$. In particular, if $D=\s^2$ then $W$ has full rank in $\s^2$. 
\end{thm}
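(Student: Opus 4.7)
The plan is to obtain Theorem~\ref{main theorem n=2 rephrased} as a direct corollary of Theorem~\ref{main theorem}, by translating the matricial local convexity assumption on $M$ into the coordinate condition \eqref{cond-1 2} (which, as the statement explicitly records, is the instance of \eqref{cond-l} relevant in dimension $n=2$), read in every local orthonormal frame on $D$.

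The key auxiliary fact I would invoke is the classical characterization of $1$-homogeneous convex extensions that underlies support-function theory: for a scalar $g\in C^2(D)$, its $1$-homogeneous extension $\tilde g(y):=|y|\,g(y/|y|)$ is locally convex on $C$ if and only if
\[
(g_{ij}+g\,\delta_{ij})_{i,j=1,2}\ge 0\quad\text{on }D.
\]
This follows from the fact that, by $1$-homogeneity, the Euclidean Hessian of $\tilde g$ at $x\in\s^2$ has $x$ in its kernel and coincides with $(g_{ij}+g\,\delta_{ij})$ on the tangent space $T_x\s^2$. (It is precisely the identity underlying the passage from $u$ to $W_{\Omega}$ in Definition~\ref{W}.)

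By the definition of matricial convexity, $M$ is locally convex on $C$ iff for every $\xi\in\R^2$ the scalar function $y\mapsto\langle M(y)\xi,\xi\rangle$ is locally convex on $C$. Since $M$ is the $1$-homogeneous extension of $A/f$,
\[
\langle M(y)\xi,\xi\rangle \;=\; |y|\cdot\bigl\langle (A/f)(y/|y|)\,\xi,\xi\bigr\rangle
\]
is precisely the $1$-homogeneous extension of the scalar $\langle A\xi,\xi\rangle/f$ on $D$. Applying the auxiliary fact with $g=\langle A\xi,\xi\rangle/f$, matricial convexity of $M$ is equivalent to
\[
\left(\delta_{ij}\,\frac{\langle A\xi,\xi\rangle}{f} + \nabla_i\nabla_j\!\left(\frac{\langle A\xi,\xi\rangle}{f}\right)\right)_{i,j=1,2}\ge 0 \quad\text{on }D,\qquad\forall\,\xi\in\R^2.
\]
Choosing $\xi=e_q$ in a fixed local orthonormal frame gives $\langle A\xi,\xi\rangle = a_{qq}$ and returns exactly \eqref{cond-1 2}; conversely, letting the frame rotate sweeps $\xi=e_q$ over all unit vectors of $\R^2$, so the equivalence is tight. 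Therefore \eqref{the condition} is equivalent to \eqref{cond-1 2} in every local orthonormal frame on $D$, and Theorem~\ref{main theorem} then delivers the constant-rank conclusion for $W$; the full-rank statement on $D=\s^2$ follows from the corresponding assertion of Theorem~\ref{main theorem}.

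The main obstacle is essentially bookkeeping: making precise the sense in which $M$ is a well-defined map $C\to\symm(2)$ (selecting a tangential frame and extending it radially to $C$), verifying that both matricial convexity of $M$ and the pointwise inequality \eqref{cond-1 2} are intrinsic, i.e.\ independent of this frame choice, and carefully matching quantifiers between ``for every $\xi\in\R^2$'' and ``for $q=1,2$ in every local orthonormal frame.'' Beyond this, no additional PDE analysis is required: the analytic heart of the matter is Theorem~\ref{main theorem}.
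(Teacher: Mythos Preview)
Your proposal is correct and follows essentially the same route as the paper: the paper's proof reduces to showing that \eqref{the condition} implies \eqref{cond-1 2}, using first that matricial convexity of $M$ forces convexity of each diagonal entry (your $\xi=e_q$ step), and then the identity between the spherical Hessian $(g_{ij}+g\,\delta_{ij})$ and the Euclidean Hessian of the $1$-homogeneous extension (your ``key auxiliary fact,'' recorded in the paper as Remark~\ref{one-hom}). The only minor difference is that the paper only asserts the implication from \eqref{the condition} to \eqref{cond-1 2}, whereas you note the equivalence; but only the implication is needed to invoke Theorem~\ref{main theorem}.
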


\medskip

Using a continuity argument, from Theorem \ref {main theorem} it can be deduced that the solution of equation \eqref{the equation} is in fact a geometric solution, that is, its $1$-homogeneous extension to $\R^n$ is convex. The application of the continuity argument requires additional assumptions, which are more restrictive in the case $n\ge3$, due to the non-linearity of condition \eqref{cond-l} with respect to the matrix $A$. For this reason we present the result in two distinct statements.

\begin{corollary}\label{the corollary} 
Let $n=2$. Under the same assumptions of Theorem \ref{main theorem n=2 rephrased} on $A$ and $f$, with $D=\s^2$, assume moreover that $f$ verifies \eqref{GM condition}. Let $u$ be a solution of equation \eqref{the equation}. Then 
\[
W=(u_{ij}+u\delta_{ij})>0\quad\mbox{on $\s^2$.}
\]  
In particular, the $1$-homogeneous extension of $u$ to $\R^n$ is convex.
\end{corollary}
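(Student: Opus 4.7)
The plan is to prove Corollary \ref{the corollary} by the continuity (connectedness) method, using the Christoffel case of Theorem \ref{GM thm} as the starting point and the constant-rank Theorem \ref{main theorem} (or its $n=2$ reformulation Theorem \ref{main theorem n=2 rephrased}) to prevent the Weingarten form from degenerating along the path.

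First I would introduce the homotopy $A_t := (1-t)\,I_n + t\,A$ for $t \in [0,1]$, a $C^{2,\gamma}$ family of positive-definite $(2,0)$-tensors connecting $I_n$ to $A$. The existence of $u$ at $t=1$ forces $f$ to satisfy the compatibility condition \eqref{eq condition in GM}, so standard linear elliptic theory yields, for each $t$, a unique (modulo linear functions) solution $u_t \in C^{4,\gamma}(\mathbb{S}^n)$ of
\[
\sum_{\alpha,\beta=1}^{n}(A_t)_{\alpha\beta}\bigl(u_{\alpha\beta}+u\delta_{\alpha\beta}\bigr)=f\quad\text{on }\mathbb{S}^n.
\]
Normalizing $u_t$ to be orthogonal to the linear functions makes $t\mapsto u_t$ continuous in $C^{4,\gamma}(\mathbb{S}^n)$, hence $t\mapsto W_{u_t}$ continuous in $C^{2,\gamma}(\mathbb{S}^n)$. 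At $t=0$ the equation is the Christoffel equation $\Delta_{\mathbb{S}^n} u_0+n u_0=f$; the GM hypothesis on $f$, Theorem \ref{GM thm}, and the existence of a geometric solution to the Christoffel problem under \eqref{GM condition} (discussed right after Theorem \ref{GM thm}) produce $W_{u_0}>0$ on $\mathbb{S}^n$.

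Next I would execute the open--closed argument on $T:=\{t\in[0,1] : W_{u_t}>0 \text{ on }\mathbb{S}^n\}$. Openness follows from compactness of $\mathbb{S}^n$ and continuity of $W_{u_t}$ in $t$. For closedness, if $t_k\in T$ and $t_k\to t_0$, passing to the limit gives $W_{u_{t_0}}\geq 0$; applying Theorem \ref{main theorem} at $t_0$ (with $D=\mathbb{S}^n$ and $n\geq 2$) upgrades this to $W_{u_{t_0}}>0$, so $t_0\in T$. Since $0\in T$ and $T$ is both open and closed in the connected interval $[0,1]$, one concludes $T=[0,1]$; in particular $W=W_{u_1}>0$ on $\mathbb{S}^n$.

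The essential technical step, and main obstacle, is verifying that the pair $(A_t,f)$ satisfies condition \eqref{cond-l} (resp.\ \eqref{the condition}) for every $t\in[0,1]$, so that the constant-rank theorem is actually available in the closedness step. In the $n=2$ case this is clean via Theorem \ref{main theorem n=2 rephrased}: the hypothesis is matrix-convexity of the $1$-homogeneous extension of $A_t/f=(1-t)\,I/f+t\,A/f$, which is preserved under convex combinations — matrix-convexity of $I/f$ is equivalent to scalar convexity of the $1$-homogeneous extension of $1/f$, i.e.\ the GM condition \eqref{GM condition}, while matrix-convexity of $A/f$ is the hypothesis. For general $n\geq 3$ one has to verify \eqref{cond-l} for $A_t$ by a direct expansion of the Hessian and gradient terms in $(A_t)_{qq}$ and $(A_t)_{q\alpha}$: the endpoints $t=0$ and $t=1$ are covered by the GM assumption and the hypothesis respectively, but the quadratic dependence of \eqref{cond-l} on $A$ makes the interpolation non-automatic, so controlling the mixed gradient terms (likely via a Cauchy--Schwarz-type bound on the fourth summand) is the central computational point.
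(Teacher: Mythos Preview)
Your continuity scheme---the homotopy $A_t=(1-t)I+tA$, solving \eqref{the equation} with coefficient $A_t$ for each $t$, starting from the Christoffel case via Theorem~\ref{GM thm} (and the Sheng--Trudinger--Wang/Andrews--Ma existence result cited right after it), and closing the set $T$ with the constant-rank theorem---is precisely what the paper has in mind. The paper itself gives no detailed proof of this corollary; it offers only the single sentence ``Using the previous results, Theorem~\ref{GM thm} and a continuity argument, we will deduce the following full rank theorem,'' so your write-up is already more detailed than the paper.

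For $n=2$ your argument is complete and correct. After multiplying \eqref{cond-1 2} by $(A_t)_{qq}/f$, the condition becomes linear in the entries of $A_t/f$, hence it interpolates directly between the GM condition on $1/f$ and the hypothesis on $A/f$; equivalently, matrix-convexity of the $1$-homogeneous extension of $A_t/f$ follows from that of $I/f$ and $A/f$, exactly as you say.

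For $n\ge 3$ the obstacle you flag is real and you should not expect it to dissolve into a Cauchy--Schwarz line. In \eqref{cond-l} the quadratic correction
\[
\frac12\frac{\nabla_i a_{qq}\nabla_j a_{qq}}{a_{qq}^2}-\frac12\sum_{\alpha,\beta=1}^{l}\frac{a^{\alpha\beta}\nabla_i a_{q\alpha}\nabla_j a_{q\beta}}{a_{qq}}
\]
does not scale well along $A_t$: the gradients pick up a factor $t$ while the denominators $(A_t)_{qq}$ and $(A_t)^{\alpha\beta}$ do not, so the difference between the condition at $t$ and the convex combination of the endpoint conditions contains a term of the type $-\dfrac{t(1-t)}{(A_t)_{qq}\,a_{qq}}\nabla_i a_{qq}\nabla_j a_{qq}$ (plus an analogous $A^{-1}$-piece) with no fixed sign. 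The paper does not supply this verification either; thus for $n\ge3$ your proof and the paper's sketch are on the same footing: the continuity method is the intended route, but the check that \eqref{cond-l} persists along the specific path $A_t=(1-t)I+tA$---or the construction of an alternative path along which it does---remains to be carried out.
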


\begin{corollary}\label{the corollary nge3} 
Let $n\ge3$. Under the same assumptions of Theorem \ref{main theorem} on $A$ and $f$, with $D=\sfe$, assume moreover that condition \eqref{cond-l} is verified by the pair $f$ and $A_t$, for every $t\in[0,1]$, where
$$
A_t:=(1-t)I_{n}+tA;
$$
(here $I_n$ is the identity matrix of order $n$). Let $u$ be a solution of equation \eqref{the equation}. Then 
\[
W=(u_{ij}+u\delta_{ij})>0\quad\mbox{on $\sfe$.}
\]  
\end{corollary}

As an application of the previous corollaries we get sufficient conditions for the existence of a solution to the mixed Minkowski problem. In view of the difference between the cases $n=2$ and $n\ge3$, we state them separately. In particular, the result in the case $n=2$ is stated in \S \ref{Mp2}. 

\begin{thm}\label{Thm MCP} Let $n\ge3$. Let $\Omega_1,\dots,\Omega_n$ be convex bodies in $\R^{n+1}$, with boundary of class $C^{4,\gamma}$, for some $\gamma>0$, and positive Gauss curvature. For every $r=1,\dots,n$, let $u_r$ be the support function of $\Omega_r$ and 
$$
W_r(x)=((u_r)_{ij}(x)+u_r(x)\delta_{ij})_{i,j=1\dots,n},\quad x\in\sfe,
$$
be the inverse Weingarten form of $\Omega_r$. Let 
$$
A(x)=(a_{ij}(x))
$$
be the mixed co-factor matrix of $W_1(x),\dots,W_{n-1}(x)$. Moreover, let
$$
A_t:=(1-t)I_{n}+tA,\quad t\in[0,1]
$$
Let $f\in C^{2, \gamma}(\s^n)$, with $f>0$ on $\s^n$, and assume that $f$ verifies condition \eqref{eq condition in GM}. Let $\mu$ be the Borel measure on $\sfe$ with density $f$ with respect to the $(n-1)$-dimensional Hausdorff measure restricted to $\sfe$. 

Suppose that, for every $1\le l\le n-1$, for every $t\in[0,1]$ and for every $q=1,\ldots,n$, the condition
\begin{equation}\label{cond-l 1} 
\left(\delta_{ij}+ \frac{\nabla_i \nabla_j (\frac{a_{qq}}{f})}{\frac{a_{qq}}{f}}+\frac12\frac{\nabla_ia_{qq}\nabla_ja_{qq}}{a^2_{qq}}-\frac12\sum_{\alpha,\beta=1}^l \frac{a^{\alpha\beta}\nabla_ia_{q\alpha}\nabla_ja_{q\beta}}{a_{qq}}\right)_{i,j=1,\dots,n}\ge 0
\end{equation}
is verified by the matrix $A_t$, for every $t\in[0,1]$.

Then there exists a convex body $\Omega$ in $\R^{n+1}$, of class $C^{2,+}$, such that
$$
S(\Omega_1,\dots,\Omega_{n-1},\Omega;\cdot)=\mu(\cdot).
$$
Moreover, $\Omega$ is uniquely determined up to translations.
\end{thm}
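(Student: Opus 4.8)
The plan is to assemble Theorem~\ref{Thm MCP} from the pieces already in place, treating it essentially as a packaging of Corollary~\ref{the corollary} together with the standard PDE theory quoted in the introduction. First I would observe that the $C^{4,\gamma}$ regularity and positive Gauss curvature hypotheses on $\Omega_1,\dots,\Omega_{n-1}$ guarantee, as noted after \eqref{ellipticity}, that each inverse Weingarten form $W_r$ is a positive-definite $C^{2,\gamma}$ tensor on $\sfe$, hence the mixed co-factor matrix $A=(a_{ij})=C(W_1,\dots,W_{n-1})$ is a $C^{2,\gamma}$ positive $(2,0)$-tensor on $\sfe$ (positivity of $A$ is equivalent to uniform ellipticity of \eqref{the equation}, which in turn follows from $W_r>0$; this is precisely \eqref{ellipticity}). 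Thus $A$ and the prescribed density $f$ satisfy all the structural hypotheses of Theorem~\ref{main theorem} with $D=\sfe$, and by assumption $f$ satisfies \eqref{eq condition in GM} and \eqref{GM condition} as well as the curvature condition \eqref{cond-l 1}, which is literally \eqref{cond-l}.

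Next I would invoke existence and uniqueness for the linear equation \eqref{the equation}: since $(a_{\alpha\beta})>0$ and $\int_{\sfe}x_jf\,dx=0$ for all $j$, the linear elliptic theory on the sphere (as in \cite{GMTZ}) produces a solution $u\in C^{4,\gamma}(\sfe)$ of \eqref{the equation}, unique up to the addition of linear functions. The point now is that an \emph{a priori} candidate solution need not be geometric, i.e.\ need not satisfy $W=(u_{ij}+u\delta_{ij})>0$. To apply Corollary~\ref{the corollary} one needs the weaker sign condition $W\ge0$ at hand; this is where the continuity/homotopy argument enters. Following the strategy sketched after Theorem~\ref{GM thm}, I would connect $f$ to a reference density (for which the corresponding solution is manifestly geometric, e.g.\ the support function of $B_1$ rescaled appropriately, or the solution obtained by taking $f$ proportional to the density of $S(\Omega_1,\dots,\Omega_{n-1},B_1;\cdot)$) by a path $f_t$, $t\in[0,1]$, along which \eqref{eq condition in GM} holds; the set of $t$ for which the solution $u_t$ of \eqref{the equation} with right-hand side $f_t$ is geometric is then shown to be open (by ellipticity and the implicit function theorem, small perturbations of a $C^{2,+}$ body stay $C^{2,+}$), nonempty (it contains $t=0$), and closed. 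Closedness is where Corollary~\ref{the corollary} does the work: uniform estimates give $C^{2}$ convergence $u_{t_k}\to u_{t_\infty}$ with $W_{t_\infty}\ge0$, and then Corollary~\ref{the corollary} — whose hypotheses \eqref{GM condition} and \eqref{cond-l} hold along the path if the homotopy is chosen to preserve them — upgrades $W_{t_\infty}\ge0$ to $W_{t_\infty}>0$, so $t_\infty$ lies in the open set and the interval is exhausted.

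Having obtained a geometric solution $u$ of \eqref{the equation} on $\sfe$, I would then translate back to convex bodies: $W=(u_{ij}+u\delta_{ij})>0$ means, by the characterization recalled after Definition~\ref{W}, that $u$ is the support function of a convex body $\Omega$ of class $C^{2,+}$, and by the density formula \eqref{density of mixed area measures} together with $\mathscr{D}(W_1,\dots,W_{n-1},W)=f$ (which is \eqref{Prob-C}, equivalent to \eqref{the equation}), we get $dS(\Omega_1,\dots,\Omega_{n-1},\Omega;x)=f(x)\,dx=d\mu(x)$, i.e.\ $\Omega$ solves \eqref{MC problem}. Uniqueness up to translations follows from the uniqueness (modulo linear functions) of the solution of the linear equation \eqref{the equation}: two geometric solutions differ by a linear function on $\sfe$, which corresponds exactly to a translation of the body. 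Finally, the case $n=2$ is handled identically, using Theorem~\ref{main theorem n=2 rephrased} in place of Theorem~\ref{main theorem} and observing that \eqref{cond-l 1} collapses to \eqref{cond-1 2} as already recorded; one checks that local convexity of the $1$-homogeneous extension of $A/f$, condition \eqref{the condition}, is equivalent to \eqref{cond-1 2} via the standard fact that a $1$-homogeneous function (here matrix-valued, read entrywise in the positive-semidefinite order) is convex on a cone iff its restriction to the sphere $g$ satisfies $g\delta_{ij}+\nabla_i\nabla_j g\ge0$.

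The main obstacle is the construction of the homotopy $f_t$ (and the correspondingly deformed co-factor matrix, if one also deforms the $\Omega_i$'s, though here the $\Omega_i$ are fixed so only $f$ moves) along which \emph{both} the compatibility condition \eqref{eq condition in GM} \emph{and} the convexity hypothesis \eqref{GM condition} — and, in the $n\ge3$ case, the tensorial inequalities \eqref{cond-l} — are preserved for every $t\in[0,1]$, together with the verification that the reference endpoint $t=0$ indeed yields a geometric solution. This is the analogue of the Sheng--Trudinger--Wang path \cite{STW} used for the classical Christoffel problem, and adapting it so that the mixed co-factor structure and the more elaborate condition \eqref{cond-l} survive the deformation is the delicate point; the rest is bookkeeping around the constant-rank theorem and the dictionary between support functions and convex bodies.
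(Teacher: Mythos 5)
Your proposal follows the route the paper intends: reduce Theorem~\ref{Thm MCP} to Corollary~\ref{the corollary} and the constant-rank Theorem~\ref{main theorem}, check that the mixed co-factor $A=C(W_1,\dots,W_{n-1})$ is a positive $C^{2,\gamma}$ tensor so that \eqref{the equation} is uniformly elliptic, obtain a solution $u$ from linear elliptic theory (unique modulo linear functions), run a continuity argument to conclude $W\ge0$ and then $W>0$, and translate back to a $C^{2,+}$ body via Remark~\ref{one-hom} and the identification of linear summands of $u$ with translations. Note, though, that the paper does not actually write out a proof of Corollary~\ref{the corollary} or of Theorem~\ref{Thm MCP}; both are stated as consequences of Theorem~\ref{main theorem}, Theorem~\ref{GM thm}, and ``a continuity argument'', with the deformation details deferred to the cited literature (principally \cite{STW}).

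The obstacle you flag at the end is real, and you should not downplay it as bookkeeping. For the closedness step of the continuity argument one must invoke the constant-rank theorem at the limiting parameter $t_\infty$, so condition \eqref{cond-l} must hold for the deformed data at \emph{every} intermediate $t$, not merely at the two endpoints. If you deform only $f$ (keeping $A$ fixed), the natural harmonic interpolation $1/f_t=(1-t)/f_0+t/f_1$ preserves \eqref{GM condition} but interacts opaquely with \eqref{cond-l} (which entangles $a_{qq}/f$, its log-gradient, and $A^{-1}$) and with the compatibility constraint \eqref{eq condition in GM}. If instead you deform the bodies via $\Omega_r^t=(1-t)B_1+t\Omega_r$ and keep $f$ fixed, the endpoint $t=0$ reduces \eqref{the equation} to the ordinary Christoffel equation and \eqref{cond-l} collapses exactly to \eqref{GM condition}, so that endpoint is covered; but for $n\ge3$ the co-factor $A^t=C(W_1^t,\dots,W_{n-1}^t)$ is a degree-$(n-1)$ polynomial in $t$, and propagating \eqref{cond-l} from the endpoints into the open interval is not automatic. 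For $n=2$ the co-factor map is linear, $A^t=(1-t)I+tA$, and then the convexity condition \eqref{cond-1 2} does pass to the convex combination of the $1$-homogeneous extensions of $1/f$ and $a_{qq}/f$, so your $n=2$ argument is on much firmer ground than the general case. One small inaccuracy: in the $n=2$ discussion you assert that \eqref{the condition} is \emph{equivalent} to \eqref{cond-1 2}; the paper only proves (and only needs) the implication \eqref{the condition}$\Rightarrow$\eqref{cond-1 2}, since \eqref{cond-1 2} constrains only the diagonal entries of $A/f$. This does not affect the correctness of your plan, which anyway uses the weaker diagonal condition. Since the paper itself does not supply the homotopy path, you are not missing something hidden in it; but a complete proof would need either to construct a path along which \eqref{cond-l} provably persists, or to state the persistence as an explicit additional hypothesis.
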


\begin{remark}\label{remark} If $A(x)=(a_{ij}(x))$ is the identity matrix for every $x\in\sfe$, that is, we are in the case of the ordinary Christoffel problem, condition \eqref{cond-l 1} becomes
$$
\left(\left(\frac1{f}\right)_{ij}+\delta_{ij}\frac{1}{f}\right)_{ij}\ge0.
$$
In other words, we get the sufficient condition \eqref{GM condition} found by Guan and Ma in \cite{Guan-Ma-2003}. 
\end{remark}

\begin{remark} Condition \eqref{GM condition} is obviously verified if $f\equiv 1$.
\end{remark}

\subsection{The mixed Christoffel problem in the $3$-dimensional space}\label{Mp2} The $3$-dimensional case ($n=2$), deserves a special treatment. The mixed Christoffel problem in $\R^3$ can be formulated as follows: let $\Omega_0$ be a convex body in $\R^3$, and let $\mu$ be a Borel measure on $\s^2$, such that
$$
\int_{\s^2}xd\mu(x)=0.
$$
The problem requires to find a convex body $\Omega$ such that
$$
S(\Omega_0,\Omega;\cdot)=\mu(\cdot).
$$
Assume that $\Omega_0$ is of class $C^{2,+}$, with support function $h\in C^2(\s^2)$; let 
$$
W_0=W_{\Omega_0}=(h_{ij}+h\delta_{ij})_{i,j=1,2},
$$
be the inverse Weingarten map of $\Omega_0$. Assume moreover that $\mu$ has a density $f\in C^{2,\gamma}(\s^2)$, for some $\gamma\in(0,1)$. 

Using Theorem \ref{main theorem n=2 rephrased}, or equivalently \eqref{cond-1 2}, we will derive two sufficient conditions for the existence of a solution, stated in the following Theorems \ref{MC 3dim 1} and \ref{MC 3dim}.  

\begin{thm}\label{MC 3dim 1} Let $\Omega_0\subset\R^3$ be a convex body with boundary of class $C^{4,\gamma}$, for some $\gamma>0$, and with positive Gauss curvature. Let $h$ be the support function of $\Omega_0$. Let $f\in C^{2,\gamma}(\s^2)$, with $f>0$ on $\s^2$, be such that condition \eqref{GM condition} holds. Assume that, for every $x\in\s^2$, and for every tangential direction $\alpha$ to $\s^2$ at $x$, the following inequality holds: 
\begin{equation}\label{new form dim two statement}
\left(\frac{h_{\alpha\alpha}(x)+h(x)}{f(x)}\right)_{\alpha\alpha}+\frac{h_{\alpha\alpha}(x)+h(x)}{f(x)}\ge0,
\end{equation}
where sub-indices denote covariant partial derivatives with respect to the direction $\alpha$. Then, there exists a convex body $\Omega\in C^{2,+}$ such that
$$
S(\Omega_0,\Omega,\cdot)=\mu(\cdot)\,,
$$
where $\mu$ is the measure such that $d\mu(x)=f(x)dx$.
\end{thm}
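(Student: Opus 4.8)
The plan is to reduce the statement to Theorem \ref{main theorem n=2 rephrased} (equivalently, to the pointwise condition \eqref{cond-1 2}) together with Corollary \ref{the corollary}. The first step is purely algebraic: when $n=2$ and $\Omega_1=\Omega_0$, the ``mixed co-factor matrix'' of the single inverse Weingarten form $W_0$ is simply the cofactor matrix of a $2\times 2$ matrix, namely $A=\operatorname{cof}W_0$. For a $2\times2$ symmetric matrix $W_0=(h_{ij}+h\delta_{ij})$ one has $\operatorname{cof}W_0 = (\operatorname{tr}W_0)\,I_2 - W_0$, so $A = (\operatorname{tr}W_0)I_2 - W_0$. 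Since $\Omega_0$ is of class $C^{2,+}$ (boundary $C^{4,\gamma}$, positive Gauss curvature), $W_0>0$ on $\s^2$, hence $A>0$ on $\s^2$ and $A\in C^{2,\gamma}(\s^2)$; so the regularity and positivity hypotheses of Theorem \ref{main theorem n=2 rephrased} are met. One must also check that equation \eqref{the equation} with this $A$ is indeed the PDE $S(\Omega_0,\Omega;\cdot)=\mu$; this is exactly \eqref{Prob-C}--\eqref{the equation} specialized to $n=2$, $n-1=1$.

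The second step is to verify condition \eqref{cond-1 2} for this choice of $A$. We must show that, for every $q=1,2$,
\[
\left(\frac{a_{qq}}{f}\,\delta_{ij}+\nabla_i\nabla_j\!\left(\frac{a_{qq}}{f}\right)\right)_{i,j=1,2}\ge 0 .
\]
Fix a point $x$ and diagonalize $W_0$ at $x$ in an orthonormal frame; then $A$ is diagonal there, with $a_{11}=w_{22}$, $a_{22}=w_{11}$, where $w_{\alpha\alpha}=h_{\alpha\alpha}+h$ are the eigenvalues (the principal radii of curvature of $\Omega_0$). The diagonal entries $a_{qq}$ of $A$ are thus, at each point, values of the quantity $h_{\alpha\alpha}+h$ along the eigendirection $\alpha$ that is \emph{orthogonal} to the one indexing $q$; in particular $a_{qq}/f$ equals $(h_{\alpha\alpha}+h)/f$ for the appropriate tangential direction $\alpha$. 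Hypothesis \eqref{new form dim two statement}, which is assumed for \emph{every} tangential direction $\alpha$ and every $x$, therefore yields precisely the nonnegativity of the scalar quantity
\[
\Big(\tfrac{h_{\alpha\alpha}+h}{f}\Big)_{\alpha\alpha} + \tfrac{h_{\alpha\alpha}+h}{f}\ge 0 ,
\]
which is the ``diagonal'' content of \eqref{cond-1 2}. The point requiring care is that \eqref{cond-1 2} asks for a matrix inequality (the full $2\times2$ Hessian plus $(a_{qq}/f)I_2$), not merely the nonnegativity of one second derivative; so one needs that \eqref{new form dim two statement}, holding for all directions $\alpha$ and uniformly in $x$, forces the matrix $\big(\tfrac{a_{qq}}{f}\delta_{ij}+\nabla_i\nabla_j(\tfrac{a_{qq}}{f})\big)\ge0$. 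The standard mechanism is that $\sup_\alpha$ of a direction-dependent second covariant derivative controls the full covariant Hessian from below via a supporting-quadratic (or, equivalently, local-convexity) argument: the $1$-homogeneous extension $M$ of $A/f$ to the cone $C$ is locally convex exactly when, for every unit tangential $\alpha$, the one-variable function $t\mapsto M(\gamma_\alpha(t))$ restricted to geodesics is convex, and a computation of the Hessian of a $1$-homogeneous function in $\R^3$ turns that into \eqref{new form dim two statement}. Thus I would actually phrase the argument as: \eqref{new form dim two statement} for all $\alpha, x$ $\iff$ $M$ (the $1$-homogeneous extension of $A/f$) is locally convex in $C$, which is hypothesis \eqref{the condition} of Theorem \ref{main theorem n=2 rephrased}.

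With hypotheses \eqref{the condition} (via the above equivalence) and \eqref{GM condition} in hand, Corollary \ref{the corollary} (in its $n=2$, Theorem \ref{main theorem n=2 rephrased} form) applies: any solution $u\in C^2(\s^2)$ of \eqref{the equation} with $W=(u_{ij}+u\delta_{ij})\ge0$ automatically satisfies $W>0$ on $\s^2$. It remains to produce such a solution. Here one invokes the existence/uniqueness discussion in the excerpt: since $A>0$ and $f>0$ with $f$ satisfying the compatibility condition \eqref{eq condition in GM} (which is forced by \eqref{GM condition}? — if not assumed separately, note \eqref{GM condition} together with evenness is the usual route; in any case the statement implicitly carries the necessary compatibility), the linear elliptic equation \eqref{the equation} on $\s^2$ has a solution $u$, unique modulo linear functions, of class $C^{4,\gamma}$ by Schauder theory. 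The continuity/homotopy argument (connecting $f$ to $1$ through functions satisfying \eqref{GM condition}, as in \cite{STW}) guarantees that along the path $W\ge0$ is preserved, and the constant-rank/full-rank conclusion upgrades this to $W>0$; then $u=u_\Omega$ is the support function of a convex body $\Omega$ of class $C^{2,+}$ with $S(\Omega_0,\Omega;\cdot)=\mu$. Uniqueness up to translation follows from the corresponding uniqueness for \eqref{the equation}. The main obstacle I anticipate is the second step — making the passage between the single scalar inequality \eqref{new form dim two statement} and the matrix condition \eqref{cond-1 2}/local convexity of $M$ fully rigorous, i.e.\ the bookkeeping that the diagonal entries of the cofactor matrix $A$ are, direction by direction, exactly the radii $h_{\alpha\alpha}+h$, and that quantifying over all $\alpha$ recovers the matrix statement.
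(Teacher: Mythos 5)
Your step 1 (the cofactor structure of $A$, $A=(\operatorname{tr}W_0)I_2-W_0$, and the fact that the diagonal entries of $A/f$ are, direction by direction, $(h_{\alpha\alpha}+h)/f$ with $\alpha$ orthogonal to the indexing direction) is correct and matches the paper. The genuine gap is in step 2: the claimed equivalence between \eqref{new form dim two statement} (for all $\alpha$, $x$) and the local convexity of $M$ is false, and your sketched ``supporting-quadratic / geodesic convexity'' mechanism does not rescue it. Matrix convexity of $M$ on the cone is, by Remark~\ref{one-hom}, equivalent to requiring that for \emph{every} pair of tangential unit vectors $\beta$ and $\gamma$ one has $\nabla_\gamma\nabla_\gamma\bigl(\beta^T(A/f)\beta\bigr)+\beta^T(A/f)\beta\ge0$ (this is \eqref{cond-1 2} in all frames). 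Using the cofactor relation $\beta^T A\beta=h_{\alpha\alpha}+h$ with $\alpha\perp\beta$, condition \eqref{new form dim two statement} is exactly the restriction of that two-parameter family of scalar inequalities to the single line $\gamma=\alpha$ (equivalently, $\gamma\perp\beta$). Quantifying over $\alpha$ and $x$ does not recover the remaining pairs $(\beta,\gamma)$; the scalar function being differentiated changes with $\alpha$, so the supremum over $\alpha$ does not bound the full Hessian of any one of them, and one can cook up smooth $A,f$ where \eqref{new form dim two statement} holds everywhere but \eqref{the condition} fails. Hence \eqref{new form dim two statement} is strictly weaker than \eqref{the condition}, and you cannot get to Theorem~\ref{main theorem n=2 rephrased} by this route.

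What the paper does instead is re-inspect the proof of Theorem~\ref{main theorem} in the case $n=2$ and observe that the argument never uses the full matrix inequality \eqref{cond-1 2}: the decisive estimate is the single scalar inequality \eqref{added 1}, namely $\bigl(a_{22}/f\bigr)_{11}+a_{22}/f\ge0$, where $\{e_1,e_2\}$ is the frame diagonalizing $W$ at the test point and $e_1$ is the direction of the vanishing eigenvalue. Then, by the cofactor identity $a_{22}=(W_0)_{11}=h_{11}+h$ (with respect to that same frame, as established in the proof of Theorem~\ref{MC 3dim}), this scalar inequality is exactly \eqref{new form dim two statement} with $\alpha=e_1$. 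Since the degenerating direction is a priori unknown, one requires \eqref{new form dim two statement} for all $\alpha$ and all $x$, which is therefore a valid (and strictly weaker) substitute for \eqref{cond-1 2} in the constant-rank theorem; the remainder (Corollary~\ref{the corollary} plus the homotopy argument) is the same as you describe. So Theorem~\ref{MC 3dim 1} is a genuine sharpening of Theorem~\ref{MC 3dim}, not a rephrasing, and the correct route is through the proof of the constant-rank theorem itself rather than through its stated hypothesis.
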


\begin{example} Let us show an example of application of the previous result. Let $f\equiv1$ so that, in particular, condition \eqref{GM condition} is verified. In this case \eqref{new form dim two statement} becomes:
\begin{equation}\label{e:new form two d statement bis}
h_{\alpha\alpha\alpha\alpha}+2h_{\alpha\alpha}+h>0, \quad\mbox{on $\s^2$.}
\end{equation}
Assume that $h\colon\sfe\to\R$ is of the form
$$
h=C+\psi
$$
where $C>0$ and $\psi\in C^4(\s^2)$ verifies
$$
\|\psi\|_{C^4(\s^2)}<\frac C4.
$$
Then \eqref{e:new form two d statement bis} is verified. 
\end{example}

\begin{thm}\label{MC 3dim} Let $\Omega_0\subset\R^3$ and $f$ be as in Theorem \ref{MC 3dim 1}, and let $W_0$ be the inverse Weingarten map of $\Omega_0$. If $f$ verifies condition \eqref{GM condition}, and the $1$-homogeneous extension of the tensor 
$$
\frac{W_0}{f}
$$
is convex in $\R^3$, then there exists a convex body $\Omega\in C^{2,+}$ such that
$$
S(\Omega_0,\Omega,\cdot)=\mu(\cdot),
$$
where $\mu$ is the Borel measure on $\s^2$ such that $d\mu=f dx$.
\end{thm}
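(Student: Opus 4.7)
The plan is to reduce the mixed Christoffel problem on $\s^2$ to the linear elliptic PDE \eqref{the equation}, apply Corollary \ref{the corollary} to obtain the full rank of $W_u$, and then establish existence via the method of continuity, following the scheme already employed for Theorem \ref{Thm MCP}.

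First I would verify that the matrix-convexity hypothesis on $W_0/f$ transfers to the mixed cofactor matrix $A/f$ appearing in \eqref{the equation}. In dimension $n=2$, polarisation of the $2\times 2$ determinant yields
\[
A \;=\; \tfrac{1}{2}\,\mathrm{cof}(W_0) \;=\; \tfrac{1}{2}\bigl(\mathrm{tr}(W_0)\,I_2 - W_0\bigr),
\]
so $A/f = \tfrac{1}{2}\,L(W_0/f)$, where $L\colon S \mapsto \mathrm{tr}(S)\,I_2 - S$ is a linear automorphism of $\symm(2)$ that preserves the positive semidefinite cone (its eigenvalues on $\mathrm{diag}(\lambda_1,\lambda_2)$ are $(\lambda_2,\lambda_1)$), commutes with scalar division, and commutes with the $1$-homogeneous extension from $\s^2$ to $\R^3$. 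Consequently the assumed convex $1$-homogeneous extension of $W_0/f$ produces one for $A/f$, so hypothesis \eqref{the condition} of Theorem \ref{main theorem n=2 rephrased} is in force. Together with \eqref{GM condition} on $f$, Corollary \ref{the corollary} then applies: any $C^2$ solution $u$ of \eqref{the equation} with $W_u \ge 0$ on $\s^2$ in fact satisfies $W_u > 0$, and is therefore the support function of a $C^{2,+}$ convex body $\Omega$ realising $S(\Omega_0,\Omega;\cdot)=\mu$.

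Existence of such a solution follows from the method of continuity. One constructs a one-parameter family of data $(W_0^{(t)},f_t)$, $t\in[0,1]$, connecting a reference problem at $t=0$ with an explicit geometric solution (for instance $W_0^{(0)}=I_2$ and $f_0\equiv 1$, with $u_0\equiv 1$ corresponding to the unit ball) to the given data at $t=1$, in such a way that ellipticity, the compatibility \eqref{eq condition in GM}, the Guan-Ma condition \eqref{GM condition} on $f_t$, and the matrix-convexity of the $1$-homogeneous extension of $W_0^{(t)}/f_t$ all persist along the entire path. The associated linear operator
\[
\mathscr{L}u \;:=\; \sum_{\alpha,\beta=1}^2 a^{(t)}_{\alpha\beta}\bigl(u_{\alpha\beta}+u\,\delta_{\alpha\beta}\bigr)
\]
on $\s^2$ is self-adjoint with three-dimensional kernel spanned by the coordinate functions $x_1,x_2,x_3$ restricted to $\s^2$, and \eqref{eq condition in GM} is precisely the solvability condition orthogonal to the kernel. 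The Fredholm alternative then gives openness of the set of parameters admitting a geometric solution, while standard Schauder estimates for \eqref{the equation} together with Corollary \ref{the corollary} (which promotes $W_u\ge 0$ to $W_u>0$ in any smooth limit of geometric solutions) give closedness. Hence this set equals $[0,1]$, and the case $t=1$ produces the desired $\Omega$; uniqueness up to translations follows from the kernel description.

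The main technical obstacle is the design of a path along which both \eqref{GM condition} and the matrix-convexity of the $1$-homogeneous extension of $W_0^{(t)}/f_t$ are preserved. Since both conditions are convexity properties of $1$-homogeneous extensions of certain scalar or matrix-valued functions on $\R^3$, they are stable under nonnegative convex combinations of the corresponding reciprocal quantities; this guides the choice of a suitable joint interpolation (e.g.\ one that interpolates $1/f_t$ linearly while interpolating $W_0^{(t)}/f_t$ as a matrix convex combination). Uniform $C^{2,\gamma}$ bounds along the path, needed for compactness in the closedness step, then follow from standard Schauder theory once uniform ellipticity of the family is secured.
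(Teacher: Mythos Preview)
Your reduction is correct and matches the paper's in spirit. The paper's proof is even shorter: it simply observes that for $n=2$ the mixed cofactor matrix $A$ of $W_0$ has the \emph{same diagonal entries} as $W_0$ up to a swap ($a_{11}=b_{22}$, $a_{22}=b_{11}$), so convexity of the $1$-homogeneous extension of $W_0/f$ gives convexity of each diagonal entry of $A/f$ (Remark~\ref{diagonal}), which is exactly condition~\eqref{cond-1 2}; Theorem~\ref{Thm MCP} then applies directly. Your route via the linear map $L(S)=\mathrm{tr}(S)I_2-S$ is a pleasant variant: because $L$ is linear and positivity-preserving it transports the full matrix-convexity of $W_0/f$ to $A/f$, so you can invoke Theorem~\ref{main theorem n=2 rephrased} instead of checking \eqref{cond-1 2} entrywise. (A minor normalisation point: in the paper's convention $A=\mathrm{cof}(W_0)$ without the factor $\tfrac12$, but this is irrelevant to the argument.)

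Your discussion of the continuity method is correct but superfluous here: once \eqref{cond-1 2} and \eqref{GM condition} are in hand, the existence and uniqueness are already packaged in Theorem~\ref{Thm MCP} (via Corollary~\ref{the corollary}), so there is no need to rebuild the homotopy argument from scratch.
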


\begin{remark} In the assumptions of Theorem \ref{MC 3dim}, let $h$ be the support function of $\Omega_0$, extended as a $1$-homogeneous function to $\R^3$. Let $D^2 h(x)$ denote the (Euclidean) Hessian matrix of $h$, for every $x\ne0$. $D^2h$ is a matrix-valued function defined in $\R^n\setminus{0}$, which extends the tensor $W_0$. Moreover, as $h$ is $1$-homogeneous, $D^2h$ is $-1$ homogeneous. As a consequence
$$
x\,\rightarrow\,|x|^2 D^2h(x),
$$
is a $1$-homogeneous extension of $W_0$. Hence 
$$
x\,\rightarrow\,|x|^2 \left[f\left(\frac x{|x|}\right)\right]^{-1}D^2h(x),
$$
is the $1$-homogeneous extension of 
$$
\frac{W_0}{f}.
$$
Therefore, the sufficient condition of Theorem \ref{MC 3dim}, for the existence of a solution to the mixed Christoffel problem is equivalent to the convexity of the matrix-valued map $M\colon\R^n\to{\rm Sym}(n+1)$ defined by
$$
M(x)=|x|^2 \left[f\left(\frac x{|x|}\right)\right]^{-1}D^2h(x).
$$
\end{remark}

The rest of the paper is organized as follows. 
Section \ref{section n=2} is devoted to the proofs of all the results established in the three-dimensional case ($n=2$).

In Section \ref{section n general} we consider the case $n\ge3$.

The proof of Corollaries \ref{the corollary} and \ref{the corollary nge3} is given in Section \ref{final section}.

\medskip

\noindent{\bf Acknowledgments.} A. Colesanti was supported by the project {\em Disuguaglianze analitiche e geome\-triche}, funded by the Gruppo per l'Analisi Matematica la Probabilit\`a e le loro Applicazioni. The research of the P. Guan is supported in part by an NSERC Discovery grant. M. Focardi has been supported by the European Union - Next Generation EU, Mission 4 Component 1 CUP B53D2300930006, codice 
2022J4FYNJ, PRIN2022 project ``Variational methods for stationary and evolution problems with singularities and interfaces''. P. Salani has been supported by the European Union - Next Generation EU, Mission 4 Component C2, PRIN2022 project ``Geometric-Analytic Methods for PDEs and Applications (GAMPA)'',  codice 
2022SLTHCE, CUP B53D23009460006. M. Focardi and P, Salani are members of GNAMPA - INdAM.

The project was initiated in 2008 and at the time we were only be able to deal with $n=2$ case. The general dimensional case was worked out very recently. P. Guan author would like to thank the department of Mathematics at the University of Florence for the hospitality.

\section{The case $n=2$}\label{sec The case n=2}

\subsection{Proof of Theorem \ref{main theorem} in the case $n=2$}\label{section n=2} 

The proof is based on the constant rank method. 
We start with an observation which will be used also in the general case: using equation \eqref{the equation}, the fact that the tensor $A$ is positive definite, and the condition $f>0$, we deduce that the rank of $W$ can not be $0$. 
In particular, as $n=2$, the rank of $W$ can be $1$ or $2$. 

If the rank equals $2$ at every point, there is nothing to prove. 

Hence, to proceed we assume that there exists a point $x_0$ where the rank is $1$. In a sufficiently small neighborhood $\mathcal U$ of $x_0$, one of the eigenvalues of $W$ is larger than an absolute positive constant $C_0$, while the other one is close to zero, and can be assumed to belong to $[0,C_0/2]$, that is,
$$
0\leq\lambda_1\leq\frac{C_0}{2}<C_0\leq\lambda_2\quad\text{in }\mathcal U\,,
$$
where, for $z\in\mathcal U$, $\lambda_1,\lambda_2$ denotes the eigenvalues of $W(z)$.  Let us notice that, by the standard regularity theory for elliptic equations, the solution $u\in C^{4,\gamma}(D)$ for some $\gamma\in(0,1)$. Then $\det(W), \lambda_1$ and $\lambda_2$ are of class $C^{2}({\mathcal U})$. 
We set
\begin{equation}\label{test function two dim}
\phi(x)=\det(W(x))=w_{11}(x)w_{22}(x)-w_{12}^2(x),\quad x\in{\mathcal U}.
\end{equation}

We now fix $z\in{\mathcal U}$ and choose a local orthonormal frame of $\s^2$, such that $W(z)$ is diagonal: note that this is possible since, by our assumptions, the quantity
$$
\sum_{i,j=1}^2 a_{ij}w_{ij}=\sum_{i,j=1}^2 a_{ij}(u_{ij}+u\delta_{ij})
$$
is independent of the local coordinate system. In the next relations, all the involved functions will be computed at $z$. From the definition of $\phi$ we have
$$
w_{11}=\frac1{w_{22}}\phi,
$$
so that
\begin{equation}\label{p1}
0\leq w_{11}\le \frac{1}{C_0}\phi.
\end{equation}
In the sequel we will denote by $C$ a general constant independent of $z\in\mathcal U$, and depending possibly only on $\|u\|_{C^3({\mathcal U})}$, $\|\phi\|_{C^2({\mathcal U})}$, {$\|A\|_{C^2({\mathcal U})}$} and $C_0$ (the value of $C$ may change even in the same line). 

Differentiating \eqref{test function two dim} with respect to $x_1$, we obtain
$$
w_{111}w_{22}=\phi_1-w_{11}w_{221}+2w_{12}w_{112}=\phi_1-w_{11}w_{22}\frac{w_{221}}{w_{22}}=\phi_1-\phi\frac{w_{221}}{w_{22}}.
$$
Whence, using again $w_{22}=\lambda_2\geq C_0$,
$$
|w_{111}|\le C(\phi+|\nabla\phi|).
$$
A similar inequality can be deduced for $w_{112}$. Therefore
\begin{equation}\label{p2}
|\nabla w_{11}|\le C (\phi+|\nabla \phi|).
\end{equation}
Our aim is to compute
$$
\sum_{\alpha,\beta=1}^2a_{\alpha\beta}(z)\phi_{\alpha\beta}(z).
$$
We start observing that, as a consequence of the Codazzi formula, the quantity
\begin{equation}\label{Codazzi}
w_{ijk},\quad i,j,k=1,\dots,n,
\end{equation}
is independent of the order of $i,j,k$. We have, for $\alpha,\beta=1,2$,
\begin{eqnarray*}
\phi_{\alpha\beta}&=&w_{11\alpha\beta}w_{22}+w_{11\alpha}w_{22\beta}+w_{11\beta}w_{22\alpha}+w_{11}w_{22\alpha\beta}-2(w_{12\alpha}w_{12\beta}+w_{12}w_{12\alpha\beta})
\end{eqnarray*}
Taking into account \eqref{p1} and \eqref{p2}, we deduce the following expression for the second order derivatives of $\phi$:
\begin{equation}\label{general expression}
\phi_{\alpha\beta}=w_{22}\left(w_{11\alpha\beta}-2\frac1{w_{22}}w_{12\alpha}w_{12\beta}\right)+ O(\phi+|\nabla \phi|),\quad\alpha,\beta=1,2.  
\end{equation}
Here and throughout the paper, by $O(\phi+|\nabla \phi|)$ we mean a quantity which is bounded in absolute value by a constant $C$ (as above) times  $(\phi+|\nabla \phi|)$.

We also have the identities (see for instance \cite{Guan-Ma-2003}):
\begin{equation}\label{covariant 2}
w_{ii\alpha\beta}=w_{\alpha\beta ii}+w_{ii}-w_{\alpha\beta},\quad\alpha,\beta,i=1,2,  
\end{equation}
so that
$$
w_{11\alpha\beta}=w_{\alpha\beta11}-w_{\alpha\beta}+w_{11}=w_{\alpha\beta11}-w_{\alpha\beta}+O(\phi+|\nabla \phi|).
$$
Hence
$$
\phi_{\alpha\beta}= w_{22}w_{\alpha\beta11}-2w_{12\alpha}w_{12\beta}-w_{22}w_{\alpha\beta}+O (\phi+|\nabla \phi|).
$$
Next we compute
$$
\sum_{\alpha,\beta=1}^2a_{\alpha\beta}\phi_{\alpha\beta}= w_{22}\sum_{\alpha,\beta=1}^2a_{\alpha\beta}w_{\alpha\beta11}-2a_{22}w_{221}^2-w_{22}\sum_{\alpha,\beta=1}^2a_{\alpha\beta}w_{\alpha\beta}+O(\phi+|\nabla \phi|),
$$
or, equivalently,
\begin{equation}\label{formula}
-\frac1{w_{22}}\sum_{\alpha,\beta=1}^2a_{\alpha\beta}\phi_{\alpha\beta}=-\sum_{\alpha,\beta=1}^2a_{\alpha\beta}w_{\alpha\beta11}+2\frac{a_{22}w_{221}^2}{w_{22}}+a_{22}w_{22}+O(\phi+|\nabla \phi|).
\end{equation}
Here, we have used \eqref{p1} to absorb the term $a_{11}w_{11}$ into $O(\phi+|\nabla \phi|)$.

Next, we differentiate twice equation \eqref{the equation} with respect to $x_1$, and we get:
\begin{eqnarray*}
\sum_{\alpha,\beta=1}^2a_{\alpha\beta}w_{\alpha\beta11}&=&f_{11}-2\sum_{\alpha,\beta=1}^2a_{\alpha\beta1}w_{\alpha\beta1}-\sum_{\alpha,\beta=1}^2a_{\alpha\beta11}w_{\alpha\beta}\\
&=&f_{11}-2a_{221}w_{221}-a_{2211}w_{22}+O(\phi+|\nabla \phi|),
\end{eqnarray*}
where $a_{\alpha\beta1}$ and $a_{\alpha\beta11}$ denote the first and the second partial derivatives of $a_{\alpha\beta}$ with respect to $x_1$, respectively, and we used \eqref{p1} and \eqref{p2}. Replacing the last equality in \eqref{formula}  we obtain:
\begin{eqnarray*}
-\frac1{w_{22}}\sum_{\alpha,\beta=1}^2a_{\alpha\beta}\phi_{\alpha\beta}&=&-f_{11}+2a_{221}w_{221}+2a_{2211}w_{22}+2\frac{a_{22}w_{221}^2}{w_{22}}+a_{22}w_{22}+O(\phi+|\nabla \phi|)\\
&=&f-f_{11}+2\frac{w_{221}}{w_{22}}(a_{22}w_{22})_1+a_{2211}w_{22}+O(\phi+|\nabla \phi|).
\end{eqnarray*}
Using again the equation (\ref{the equation}), we obtain
$$
w_{22}=\frac{f}{a_{22}}-\frac{a_{11}}{a_{22}}w_{11}=
\frac{f}{a_{22}}+O(\phi+|\nabla \phi|).
$$
Here we used \eqref{p1} and the fact that, due to the positivity of the tensor $A$, $a_{22}\ge c$ for some constant $c>0$ depending on $A$ only. Moreover, differentiating once \eqref{the equation} with respect to $x_1$ and taking into account \eqref{p1} and \eqref{p2}, we get
$$
a_{221}w_{22}+a_{22}w_{221}= f_1 +O(\phi+|\nabla \phi|).
$$
Therefore
$$
\frac{w_{221}}{w_{22}}=\frac{f_1}{f}-\frac{a_{221}}{a_{22}}+O(\phi+|\nabla \phi|).
$$
For simplicity, let $a=a_{22}$, $a_1=a_{221}$ and $a_{11}=a_{2211}$. From the previous steps we deduce
\begin{eqnarray*}
-\frac1{w_{22}}\sum_{\alpha,\beta=1}^2a_{\alpha\beta}\phi_{\alpha\beta}&
=&f-f_{11}+2\frac{f_1^2}{f}-2\frac{a_1f_1}{a}+\frac{fa_{11}}{a}+O(\phi+|\nabla \phi|)\\
&=&\frac{f^2}{a}\left[\left(\frac{a}{f}\right)_{11}+\frac af\right]+O(\phi+|\nabla \phi|).
\end{eqnarray*}
By \eqref{cond-1 2}, the first term in the right hand side of the previous relation is non-negative: 
\begin{equation}\label{added 1}
\left(\frac{a}{f}\right)_{11}+\frac af\ge0.
\end{equation}
Hence, we have the following differential inequality in $\mathcal U$:
\[
\sum_{\alpha,\beta=1}^2a_{\alpha\beta}\phi_{\alpha\beta}\le C (\phi+|\nabla \phi|),
\]
for every $z\in{\mathcal U}$. As $\phi\ge0$ and $\phi(x_0)=0$, it follows from the strong minimum principle that $\phi\equiv0$ in $D$, as $D$ is connected. This is equivalent to say that the rank of $W$ is constantly $1$ in $D$.

The proof of the final part of the statement, relative to the case $D=\s^2$, is provided in Section 3, in the more general setting of arbitrary dimension $n$.

\subsection{Proof of Theorems \ref{main theorem n=2 rephrased}, \ref{MC 3dim 1} and \ref{MC 3dim}}\label{sec. MC problem dim three}

We will need the following two remarks.

\begin{remark}\label{diagonal}
Let
$$
M\colon\R^{n+1}\to\symm(n),
$$
and let $M_{ij}$ denote the $ij$-th entry of $M$, for $i,j=1,\dots,n$. If $M$ is convex, then in particular $M_{ii}$ is a convex function in $\R^{n+1}$, for every $i=1,\dots,n+1$.  
\end{remark}

\begin{remark}\label{one-hom}
Let $g\colon\sfe\to\R$ and let $G\colon\R^{n+1}\to\R$ be the $1$-homogeneous extension of $g$. Assume that $g\in C^2(\sfe)$, and let $g_{ij}$, $i,j=1,\dots,n$, denote the second covariant derivatives of $g$, with respect to a local orthonormal frame $\{\bar e_1,\dots,\bar e_n\}$ on $\sfe$, defined on some open neighborhood of a point $x_0\in\sfe$. We now choose an orthonormal frame $\{e_1,\dots,e_n,e_{n+1}\}$ in $\R^{n+1}$, such that $\{e_1,\dots,e_n\}$ coincides with $\{\bar e_1,\dots,\bar e_n\}$ at $x_0$. Then
$$
g_{ij}(x_0)+g(x_0)\delta_{ij}=G_{ij}(x_0)\quad\forall\, i,j=1,\dots,n,
$$
where $G_{ij}$, $i,j=1,\dots,n$, denote the Euclidean second derivatives of $G$. A proof of this fact can be found, for instance, in \cite[Appendix]{Colesanti-Hug-Saorin-2014}.
\end{remark}

\begin{proof}[Proof of Theorem \ref{main theorem n=2 rephrased}] We need to prove that condition \eqref{the condition} implies \eqref{cond-1 2}. Let us fix an orthonormal frame on $\sfe$, and $q\in\{1,\dots,n\}$. By Remark \ref{diagonal}, the convexity of $M$ implies that the $1$-homogeneous extension of $(a_{qq}/f)$ to $\R^{3}$ is convex. Then \eqref{cond-1 2} follows from Remark \ref{one-hom}. 
\end{proof}

\begin{proof}[Proof of Theorem \ref{MC 3dim}] 
By \eqref{Prob-C}, the tensor $A=(a_{ij})_{i,j=1,2}$ appearing in \eqref{the equation} is given by
$$
A=C(W_0),
$$
where $C(W_0)$ is the co-factor matrix of $W_0$. Let
$$
W_0=
\left(
\begin{array}{cc}
b_{11}     &b_{12}  \\
b_{21}     &b_{22} 
\end{array}
\right).
$$
Then, by \eqref{cmc} we see that:
$$
(a_{ij})_{i,j=1,2}=
\left(
\begin{array}{cc}
b_{22}      &-b_{12}  \\
-b_{21}     &b_{11} 
\end{array}
\right)
$$
In particular, in this case the matrix 
$$
\frac1f(a_{ij})_{i,j=1,2}
$$ 
has the same diagonal entries of the matrix 
$$
\frac1f W_0,
$$ 
up to a permutation. The convexity of the $1$-homogeneous extension of the latter matrix then implies that assumption 
\eqref{cond-1 2} is verified, by the same argument used in the proof of Theorem \ref{main theorem n=2 rephrased}.
\end{proof}

\begin{proof}[Proof of Theorem \ref{MC 3dim 1}]
The proof is based on an inspection of the proofs of Theorem \ref{main theorem} in the case $n=2$, and of the proof of Theorem \ref{MC 3dim}. In the final part of the proof of Theorem \ref{main theorem} for $n=2$, we have seen that the argument of the proof works under the condition:
$$
\left(\frac{a_{22}}{f}\right)_{11}+\frac{a_{22}}f\ge0.
$$
Here, a local orthonormal coordinate system $\{e_1,e_2\}$ has been fixed on $\s^2$, and the subscript $1$ denotes the covariant differentiation along $e_1$. On the other hand, as we have seen in the proof of Theorem \ref{MC 3dim}, 
$$
a_{22}=w_{11}=h_{11}+h,
$$
where $h$ is the support function of the given convex body $\Omega_0$. Hence the condition becomes
$$
\left(\frac{h_{11}+h}{f}\right)_{11}+\frac{h_{11}+h}f\ge0.
$$
Note that the direction $e_1$ may be completely arbitrary. We conclude that in Theorem \ref{main theorem}, \eqref{cond-1 2} can be replaced by condition:  
\begin{equation}\label{new form dim two}
\left(\frac{h_{\alpha\alpha}(x)+h(x)}{f(x)}\right)_{\alpha\alpha}+\frac{h_{\alpha\alpha}(x)+h(x)}{f(x)}\ge0,\quad\forall\, x\in\s^2,\, \forall\, \alpha,
\end{equation}
where $\alpha$ is a tangential direction to $\s^2$ at $x$. 
\end{proof}

\section{The $n$-dimensional case}\label{section n general}

This section is devoted to the proof of Theorem \ref{main theorem}, when $n\ge3$.

\subsection{Some preliminary facts}\label{section preliminaries}
We assume throughout that $u$ is a solution to \eqref{the equation} in $D$, and that the assumptions of Theorem \ref{main theorem} concerning the tensor $(a_{\alpha\beta})_{\alpha,\beta=1,\dots,n}$ and the function $f$, are verified. The tensor $W=W(x)$ is defined in the statement of the theorem; as usual $w_{ij}$, $i,j=1,\dots,n$, denote the entries of $W$. By assumption \eqref{ellipticity} and standard regularity theory of elliptic equations, we have $u\in C^{4,\gamma}(\sfe)$, for some $\gamma\in(0,1)$. 


\medskip

Given a square symmetric matrix $M\in\mbox{Sym}(n)$ and $j\in\{0,1,\dots,n\}$, let $\sigma_j(M)$ denote the $j$th elementary symmetric function of the eigenvalues of $M$. Note that, if $M$ is positive semi-definite, then the rank of $M$ is $l\in\{0,\dots,n-1\}$ if and only if
$$
\sigma_l(M)>0\quad\mbox{and}\quad\sigma_j(M)=0\quad\forall\, j=l+1,\dots,n.
$$
If 
$$
\sigma_n(W(x))>0\quad\forall\, x\in D,
$$
then $W$ has constant (and maximal) rank $n$ in $D$. So, in the perspective to prove Theorem \ref{main theorem}, we will assume that there exists an integer $l$ between $0$ and $(n-1)$, with the following property:
$$
\sigma_l(W(x))>0,\quad\forall\, x\in D,
$$
and there exists $x_0\in D$, such that
$$
\sigma_{l+1}(W(x_0))=0.
$$

\begin{remark}\label{l bigger than one} As already observed in the two-dimensional case, $l$ is at least $1$ by equation \eqref{the equation} and the strict positivity of $f$.
\end{remark}

We now introduce the {\it test function} $\phi$, which is one of the main tools of this proof. We remark that the choice of $\phi$ for $n\ge3$ is different from the one in the case $n=2$. Define
\begin{equation}\label{newphi}
\phi(x)=\sigma_{l+1}(W(x))+\frac{\sigma_{l+2}(W(x))}{\sigma_{l+1}(W(x))},
\end{equation}
for every $x$ such that $\sigma_{l+1}(W(x))>0$. 

\begin{remark}\label{testfunction} This test function was introduced in \cite{BG}, where the authors observed that, by the Newton-McLaurin inequalities, $\phi$ can be continuously extended to the case $\sigma_{l+1}(W)=0$, setting
\begin{equation*}
\frac{\sigma_{l+2}(W)}{\sigma_{l+1}(W)}=0\quad\mbox{if $\sigma_{l+1}(W)=0$.}
\end{equation*}
Therefore we complete the definition of $\phi$ setting
\begin{equation}\label{extension}
\phi(x)=0,\quad\mbox{$\forall\, x$ s.t. $\sigma_{l+1}(W(x))=0$.}
\end{equation} \end{remark}


The general strategy of the proof will be to show that, under the assumptions of the theorem, $\phi$ verifies a differential inequality, to which the strong minimum principle can be applied. This will lead to the conclusion
$$
\phi\equiv0\quad\mbox{in $D$,}
$$
that is, $W$ has constant rank $l$ in $D$. 

\medskip

Let $D'$ be an open and connected set compactly contained in $D$, and let $C_0>0$ be such that 
\begin{equation}\label{lbound-l}
\sigma_l(W)\ge C_0\quad\mbox{in $D'$.}
\end{equation}
For any $z\in D'$, let $\lambda_1\le\lambda_2\le\dots\le\lambda_n$ be the eigenvalues of the matrix $W$ at $z$. As $u\in C^3(D)$, there exists a constant $C>0$, depending on $\|u\|_{C^3}$, $\|\phi\|_{C^2}$, $C_0$ and $n$, such that \[\lambda_{n}\ge\lambda_{n-1}\ge\dots\ge\lambda_{n-l+1}\ge C, \quad \mbox{in $D'$}.\] We set
$$ B=\{1,\dots,n-l\},\quad G=\{n-l+1,\dots,n\}. $$
We now recall a notation which is common in the context of constant rank type results. Given two functions $g$ and $h$ defined in $D'$, and $y\in D'$, we write
$$
g(y)\lesssim h(y)
$$
if there exist positive constants $c_1$ and $c_2$, such that
$$
g(y)-h(y)\le c_1|\nabla\phi(y)|+c_2\phi(y).
$$
We also write
$$
g(y)\sim h(y)
$$
if $g(y)\lesssim h(y)$ and $g(y)\lesssim h(y)$. Moreover, we write 
$$
g\lesssim h\qquad \mbox{in $D'$}
$$
if this inequality holds with constants $c_1$ and $c_2$ independent of $y$ and depending only on $\|u\|_{C^3}(D')$, $\|\phi\|_{C^2}(D')$, $C_0$ and $n$. 

As in the proof of Theorem \ref{main theorem} in the case $n=2$, we also write 
$$
g=O(\psi),
$$
for some positive function $\psi$ defined in $D'$, if 
$$
|g|\le C\psi\quad\mbox{in $D'$,}
$$
where, as above, the constant $C$ depends only on $\|u\|_{C^3}$, $\|\phi\|_{C^2}$, $C_0$ and $n$.

\medskip

In view of the Codazzi property of $W$ and arguments presented in \cite{BG}, $\phi$ has the properties listed below.


\begin{enumerate}
\item[{I.}] Once extended as indicated in \eqref{extension}, $\phi\in C^{1,1}$ (see Corollary 2.2 in \cite{BG}).
\item[{II.}] $\forall \alpha,\beta=1,\dots,n$, 
\begin{eqnarray}\label{new3-1}
\phi_{\alpha\beta}&=&O\left(\phi +\sum_{i,j\in
B}|\nabla
w_{ij}|\right)-\left[\frac{\sum_{i\in
B}V_{i\alpha}V_{i\beta}}{\sigma^3_{1}(B)} +\frac{\sum_{i,j\in B,
i\neq j}
w_{ij\alpha}w_{ji\beta}}{\sigma_{1}(B)}\right]+\nonumber \\
&&+\sum_{i\in B}\left[\sigma_l(G)+\frac{\sigma_{1}^{2}(B|i)+
\sigma_{2}(B|i)}{\sigma_{1}^{2}(B)}\right]
\left[w_{ii\alpha\beta} - 2\sum_{j\in
G}\frac{w_{ji\alpha}w_{ij\beta}}{w_{jj}}\right],
\end{eqnarray}
where
\[
V_{i\alpha}=\sigma_{1}(B)w_{ii\alpha}-w_{ii}\sum_{j\in B}w_{jj\alpha},
\] 
and $\sigma_1(B)$, $\sigma_k(B|i)$ denote respectively the $1$st, $k$th elementary symmetric function of the eigenvalues with indices in $B$, $B\setminus\{i\}$. 
Relation \eqref{new3-1} follows from Lemma 2.4 and Proposition 2.1 in \cite{BG} (see also the argument used to deduce (3.19) there). 
\item[{III.}] Let $m=|B|=n-l$. By Lemma 3.3 in \cite{BG}, there is a constant $C$ depending only on $n$, $\|W\|_{C^0}$ and $C_0$ in (\ref{lbound-l}), such that for each $\alpha=1,\dots,n$, for any $K>0$, $\delta>0$,
\begin{equation}\label{control1}
\sum_{i,j=1}^m|w_{ij\alpha}|\le C\left(1+\frac{2}{\delta}+K\right)
(\phi+|\nabla \phi|)+\frac{\delta}{2K}\frac{\sum_{i\neq j}^m|w_{ij\alpha}|^{2}}
{\sigma_{1}(B)}+
\frac{C}{K}\frac{\sum_{i=1}^mV_{i\alpha}^{2}}{\sigma_{1}^{3}(B)}.
\end{equation}
\end{enumerate}
\begin{remark}\label{newsim} By (\ref{new3-1}), (\ref{control1}) and the uniform ellipticity of $(a_{\alpha\beta})$, in the rest of the proof, we may write $w_{\alpha\beta i}\sim 0$ if $i\in B$ and at least one of $\alpha, \beta$ is in $B$. \end{remark} 

\subsection{The differential inequality for the function $\phi$}

In the sequel we will use the same notation adopted in the proof for the case $n=2$, for the covariant derivatives of the coefficients $a_{\alpha\beta}$. In particular,
$$
a_{\alpha\beta i},\quad a_{\alpha\beta ij}
$$
denote the first and second covariant derivatives of $a_{\alpha\beta}$, along the directions $e_i$, and along the directions $e_i$ and $e_j$ respectively, of a local orthonormal coordinate system on $\sfe$.

\begin{proposition} Let $D'$ be an open subset of $\mathbb{S}^n$, and let $u\in C^3(\Omega)$ be a solution of equation \eqref{the equation} in $D'$, such that the matrix $W$ defined in \eqref{W} is positive semi-definite in $D'$. Then $\phi$ verifies the following relation in $D'$ 
\begin{eqnarray}\label{equation seven}
\sum_{\alpha,\beta=1}^na_{\alpha\beta}\phi_{\alpha\beta}&\leq &\sum_{i\in B}\left[\sigma_l(G)+\frac{\sigma_{1}^{2}(B|i)+
\sigma_{2}(B|i)}{\sigma_{1}^{2}(B)}\right]\times\\ \nonumber
&&\times\left(-f+f_{ii}
-\sum_{\alpha\in G}a_{\alpha\alpha ii}w_{\alpha\alpha} -
2\sum_{\alpha,\beta\in G}\left[
a_{\alpha\beta i}w_{\alpha\beta i}+\sum_{j\in G}\frac{a_{\alpha\beta}w_{ij\alpha}w_{ij\beta}}{w_{jj}}
\right]\right) \\ \nonumber
&& +O(\phi+|\nabla \phi|).\nonumber
\end{eqnarray}
\end{proposition}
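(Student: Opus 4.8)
The plan is to establish \eqref{equation seven} by substituting into the structural formula \eqref{new3-1} the derivatives of $W$ obtained by differentiating the PDE \eqref{the equation}, following the pattern of the $n=2$ case but now keeping track of the full block decomposition $B\cup G$ of indices. First I would recall from \eqref{new3-1} that modulo $O(\phi+|\nabla\phi|)$ and modulo the manifestly non-positive terms $-\sum_{i\in B}V_{i\alpha}V_{i\beta}/\sigma_1^3(B)$ and $-\sum_{i\neq j\in B}w_{ij\alpha}w_{ji\beta}/\sigma_1(B)$, the Hessian $\phi_{\alpha\beta}$ equals $\sum_{i\in B}\Theta_i\big(w_{ii\alpha\beta}-2\sum_{j\in G}w_{ji\alpha}w_{ij\beta}/w_{jj}\big)$, where $\Theta_i=\sigma_l(G)+(\sigma_1^2(B|i)+\sigma_2(B|i))/\sigma_1^2(B)\ge 0$ is the coefficient appearing in the statement. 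Contracting with the positive tensor $(a_{\alpha\beta})$ and using that the two bracketed quadratic terms in \eqref{new3-1} contract to non-positive quantities (ellipticity of $A$), I get $\sum a_{\alpha\beta}\phi_{\alpha\beta}\le \sum_{i\in B}\Theta_i\big(\sum_{\alpha,\beta}a_{\alpha\beta}w_{ii\alpha\beta}-2\sum_{\alpha,\beta}a_{\alpha\beta}\sum_{j\in G}w_{ji\alpha}w_{ij\beta}/w_{jj}\big)+O(\phi+|\nabla\phi|)$.

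Next I would process each of the two inner terms. For the first, I use the commutation identity \eqref{covariant 2}, namely $w_{ii\alpha\beta}=w_{\alpha\beta ii}+w_{ii}-w_{\alpha\beta}$, and absorb $w_{ii}$ into $O(\phi+|\nabla\phi|)$ since $i\in B$ (so $\sigma_l(W)\ge C_0$ forces $w_{ii}\sim 0$ up to the relevant scale — this is the content of Remark \ref{newsim}). Then $\sum_{\alpha,\beta}a_{\alpha\beta}w_{\alpha\beta ii}$ is computed by differentiating \eqref{the equation} twice along $e_i$: $\sum a_{\alpha\beta}w_{\alpha\beta ii}=f_{ii}-2\sum a_{\alpha\beta i}w_{\alpha\beta i}-\sum a_{\alpha\beta ii}w_{\alpha\beta}$. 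In each of these sums, Remark \ref{newsim} kills every term in which an index lies in $B$ (with the partner also constrained appropriately), leaving only the $G$-block contributions $-2\sum_{\alpha,\beta\in G}a_{\alpha\beta i}w_{\alpha\beta i}$ and $-\sum_{\alpha\in G}a_{\alpha\alpha ii}w_{\alpha\alpha}$ (the off-diagonal $w_{\alpha\beta}$ for $\alpha\ne\beta$ in $G$ being $\sim 0$ as well, once one diagonalizes $W$ at the reference point). I also need the remaining $-\sum a_{\alpha\beta}w_{\alpha\beta}=-f$ via \eqref{the equation}. The second inner term, $-2\sum_{\alpha,\beta}a_{\alpha\beta}\sum_{j\in G}w_{ji\alpha}w_{ij\beta}/w_{jj}$, is simplified again by Remark \ref{newsim}: the factor $w_{ji\alpha}$ with $i\in B$ forces $\alpha\in G$ (otherwise it is $\sim 0$), and similarly $\beta\in G$, so only $\sum_{\alpha,\beta\in G}a_{\alpha\beta}\sum_{j\in G}w_{ij\alpha}w_{ij\beta}/w_{jj}$ survives. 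Collecting these pieces yields exactly the right-hand side of \eqref{equation seven}.

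The main obstacle I anticipate is the bookkeeping around which mixed third derivatives $w_{\alpha\beta i}$ can legitimately be dropped. Remark \ref{newsim} asserts $w_{\alpha\beta i}\sim 0$ when $i\in B$ and at least one of $\alpha,\beta\in B$, but this is an estimate of the form $|w_{\alpha\beta i}|\le c_1|\nabla\phi|+c_2\phi$ that emerges only after combining \eqref{new3-1} with the delicate interpolation inequality \eqref{control1} (choosing $K$ large and $\delta$ small to absorb the bad quadratic terms). One must be careful that these third derivatives appear quadratically in some places and linearly in others, and that the weights $\Theta_i$ are bounded; a term like $w_{\alpha\beta i}^2$ with one index in $B$ is not automatically $O(\phi+|\nabla\phi|)$ unless it is actually controlled by the good negative terms already extracted, so the ordering of the estimates matters. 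The other delicate point is ensuring that after diagonalizing $W(z)$ at the fixed point $z$, the off-diagonal entries $w_{\alpha\beta}$ with $\alpha\neq\beta$ vanish there exactly (they do, by the choice of frame), so that sums over $\alpha,\beta\in G$ collapse to sums over the diagonal $\alpha=\beta$ where needed — this is used implicitly in passing from $\sum_{\alpha,\beta\in G}a_{\alpha\beta ii}w_{\alpha\beta}$ to $\sum_{\alpha\in G}a_{\alpha\alpha ii}w_{\alpha\alpha}$. Apart from these points, the computation is a direct, if lengthy, generalization of the $n=2$ argument carried out earlier in the excerpt.
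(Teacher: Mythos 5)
Your proposal is correct and follows essentially the same route as the paper's proof: you contract \eqref{new3-1} with the elliptic tensor $A$, apply the commutation identity \eqref{covariant 2}, differentiate the PDE twice along $e_i$, and invoke Remark \ref{newsim} (justified via \eqref{control1}) to discard third-derivative terms with two bad indices and collapse the remaining sums to the good block $G$. The one imprecision in your opening paragraph — writing the error in \eqref{new3-1} as $O(\phi+|\nabla\phi|)$ while simultaneously dropping the negative quadratic terms, when in fact it is $O(\phi+\sum_{i,j\in B}|\nabla w_{ij}|)$ and those negative terms must be \emph{retained} to absorb it through \eqref{control1} — is correctly diagnosed and repaired in your discussion of obstacles, so the argument as a whole matches the paper's.
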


\begin{proof} For an arbitrary $z\in D'$, we choose a local orthonormal frame $\{e_1,\dots,e_n\}$ defined in a neighborhood of $z$, and we assume that $W(z)$ is diagonal, with the convention on sets $G$ and $B$ of ``good'' and ``bad'' subsets of indices of the eigenvalues of $W$ described before. We have 
$$
w_{ii}\sim0\quad\forall\, i\in B,\quad w_{ij}\sim0\quad\mbox{for $i\ne j$.}
$$
As a consequence of (\ref{new3-1})  and \eqref{covariant 2}, we obtain
\begin{eqnarray}\label{formula 3}
\sum_{\alpha,\beta=1}^na_{\alpha\beta}\phi_{\alpha\beta}&\leq &\sum_{i\in B}\left[\sigma_l(G)+\frac{\sigma_{1}^{2}(B|i)+
\sigma_{2}(B|i)}{\sigma_{1}^{2}(B)}\right]\left[\sum_{\alpha,\beta=1}^na_{\alpha\beta}w_{\alpha\beta ii}\right.\\ \nonumber
&&\left.-\sum_{\alpha,\beta=1}^na_{\alpha\beta}w_{\alpha\beta}-2\sum_{\alpha,\beta=1}^n\sum_{j\in G}a_{\alpha\beta}\frac{w_{ij\alpha}w_{ij\beta}}{w_{jj}}\right]\\ \nonumber && -\sum_{\alpha,\beta}\left[\frac{\sum_{i\in
B}a_{\alpha \beta}V_{i\alpha}V_{i\beta}}{\sigma^3_{1}(B)} +\frac{\sum_{i,j\in B,
i\neq j}a_{\alpha\beta}w_{ij\alpha}w_{ji\beta}}{\sigma_{1}(B)}\right]\\ \nonumber
&&+O\left(\phi +\sum_{i,j\in B}|\nabla w_{ij}|\right).
\end{eqnarray}

By the uniform ellipticity, there exists $\delta_0>0$ such that
\[
(a_{\alpha\beta}) \ge \delta_0 I_{n\times n},
\]
where $I_{n\times n}$ is the identity matrix of order $n$. Therefore
\[
\sum_{\alpha,\beta}\left[\frac{\sum_{i\in
B}a_{\alpha \beta}V_{i\alpha}V_{i\beta}}{\sigma^3_{1}(B)} +\frac{\sum_{i,j\in B,
i\neq j}a_{\alpha\beta}
w_{ij\alpha}w_{ji\beta}}{\sigma_{1}(B)}\right]\ge \delta_0 \left[\frac{\sum_{i\in
B}V^2_{i\alpha}}{\sigma^3_{1}(B)} +\frac{\sum_{i,j\in B,
i\neq j}
|w_{ij\alpha}|^2}{\sigma_{1}(B)}\right]
\]
Therefore, by (\ref{formula 3}),
\begin{eqnarray}\label{formula 4}
\sum_{\alpha,\beta=1}^na_{\alpha\beta}\phi_{\alpha\beta}&\leq &\sum_{i\in B}\left[\sigma_l(G)+\frac{\sigma_{1}^{2}(B|i)+
\sigma_{2}(B|i)}{\sigma_{1}^{2}(B)}\right]\left[\sum_{\alpha,\beta=1}^na_{\alpha\beta}w_{\alpha\beta ii}\right.\\ \nonumber
&&\left.-\sum_{\alpha,\beta=1}^na_{\alpha\beta}w_{\alpha\beta}-2\sum_{\alpha,\beta=1}^n\sum_{j\in G}a_{\alpha\beta}\frac{w_{ij\alpha}w_{ij\beta}}{w_{jj}}\right]\\ \nonumber && -\delta_0 \left[\frac{\sum_{i\in
B}V^2_{i\alpha}}{\sigma^3_{1}(B)} +\frac{\sum_{i,j\in B,
i\neq j}
|w_{ij\alpha}|^2}{\sigma_{1}(B)}\right]+O\left(\phi +\sum_{i,j\in B}|\nabla w_{ij}|\right).
\end{eqnarray}

Next, we use (\ref{control1}) to deduce that
$$
\sum_{i,j\in B}|\nabla w_{ij}|
$$ 
can be controlled by 
\[
\delta_0 \left[\frac{\sum_{i\in
B}V^2_{i\alpha}}{\sigma^3_{1}(B)} +\frac{\sum_{i,j\in B,
i\neq j}
|w_{ij\alpha}|^2}{\sigma_{1}(B)}\right] +O(\phi +|\nabla \phi|). 
\] 
Hence
\begin{eqnarray}\label{formula 30}
\sum_{\alpha,\beta=1}^na_{\alpha\beta}\phi_{\alpha\beta}&\le &\sum_{i\in B}\left[\sigma_l(G)+\frac{\sigma_{1}^{2}(B|i)+
\sigma_{2}(B|i)}{\sigma_{1}^{2}(B)}\right]\left[\sum_{\alpha,\beta=1}^na_{\alpha\beta}w_{\alpha\beta ii}\right.\\ \nonumber
&&\left.-\sum_{\alpha,\beta=1}^na_{\alpha\beta}w_{\alpha\beta}-2\sum_{\alpha,\beta=1}^n\sum_{j\in G}a_{\alpha\beta}\frac{w_{ij\alpha}w_{ij\beta}}{w_{jj}}\right]\\ \nonumber
&& +O(\phi +|\nabla \phi|).
\end{eqnarray}
Next, for $i\in B$, we differentiate twice \eqref{the equation} with respect to $e_i$: 
$$
\sum_{\alpha,\beta=1}^n\left[
a_{\alpha\beta ii}w_{\alpha\beta}+2a_{\alpha\beta i}w_{\alpha\beta i}+a_{\alpha\beta}w_{\alpha\beta ii}\right]=f_{ii};
$$
whence
\begin{eqnarray*}
\sum_{\alpha,\beta=1}^n\sum_{i\in B}a_{\alpha\beta}w_{\alpha\beta ii}&=&\sum_{i\in B}f_{ii}-
\sum_{\alpha,\beta=1}^n\sum_{i\in B}a_{\alpha\beta ii}w_{\alpha\beta}-2\sum_{\alpha,\beta=1}^n\sum_{i\in B}a_{\alpha\beta i}w_{\alpha\beta i}\\
&\sim&\sum_{i\in B}f_{ii}-\sum_{\alpha\in G}^n\sum_{i\in B}a_{\alpha\alpha ii}w_{\alpha\alpha}-2\sum_{\alpha,\beta=1}^n\sum_{i\in B}a_{\alpha\beta i}w_{\alpha\beta i}.
\end{eqnarray*}
Replacing this relation into \eqref{formula 30} we obtain (\ref{equation seven}).
\end{proof}

We will need a further auxiliary result. Let $\M_{l\times l}$ be the space of $l\times l$ matrices, where $l\in\N$. 
We consider the standard Euclidean inner product for matrices defined by
$$
\langle M_1,M_2\rangle=\mbox{tr}(M_1 M_2^T),
$$
where $M_1, M_2\in\M_{l\times l}$, and the apex $T$ denotes the transpose. 

\begin{lemma}\label{rank-reduction}  
Let $l\ge2$ be an integer number and let $R,S,T\in\M_{l\times l}$. Assume in particular that $R$ is symmetric and positive definite. Let 
$$
{\mathcal D}=\{W\in\symm(l)\colon\langle R,W\rangle=1, W\ge0\}.
$$
Consider the function $\rho\colon{\mathcal D}\to\R$ defined by
\[
\rho(W)=\langle S, W\rangle+\langle T,W\rangle^2.
\]
Then there exists $W_{\rm min}\in\mathcal D$, with $rank(W_{\rm min})=1$, such that 
$$
\rho(W_{\rm min})=\min_{\mathcal D}\rho.
$$
\end{lemma}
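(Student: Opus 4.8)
The plan is to exploit the structure of $\rho$ as a quadratic polynomial of two linear functionals restricted to the compact convex slice $\mathcal D$, and to push any minimizer to an extreme point of a suitable face. First I would observe that $\mathcal D$ is nonempty (since $R>0$, any rank-one $W=v v^T$ with $\langle R,vv^T\rangle=1$ lies in $\mathcal D$), convex, and compact: the constraint $\langle R,W\rangle=1$ together with $R>0$ and $W\ge 0$ forces $\|W\|$ to be bounded, because $\langle R,W\rangle\ge \lambda_{\min}(R)\,\mathrm{tr}(W)$ and $\mathrm{tr}(W)=\|W\|_1$ controls $W\ge 0$. Since $\rho$ is continuous, a minimizer $W_0$ exists. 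The remaining task is to replace $W_0$ by a rank-one minimizer.

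The key idea is to restrict $\rho$ to a line segment inside $\mathcal D$ on which the linear functional $\langle T,\cdot\rangle$ is \emph{constant}, so that along such a segment $\rho$ is \emph{affine} in the parameter. Concretely, given a minimizer $W_0$ of positive rank $r\ge 2$, I would look for a nonzero symmetric matrix $H$ with $\langle R,H\rangle=0$, $\langle T,H\rangle=0$, and $\mathrm{supp}(H)\subseteq \mathrm{range}(W_0)$ (i.e.\ $H$ is supported on the $r$-dimensional range of $W_0$); the space of symmetric matrices supported there has dimension $r(r+1)/2\ge 3$ when $r\ge 2$, and imposing the two linear conditions $\langle R,H\rangle=0$, $\langle T,H\rangle=0$ still leaves a nonzero solution. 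For small $|t|$, $W_0+tH$ stays positive semidefinite (since $W_0$ is strictly positive on its range) and satisfies $\langle R,W_0+tH\rangle=1$, hence lies in $\mathcal D$; moreover $\langle T, W_0+tH\rangle=\langle T,W_0\rangle$, so $t\mapsto\rho(W_0+tH)=\langle S,W_0\rangle+t\langle S,H\rangle+\langle T,W_0\rangle^2$ is affine. Minimality of $W_0$ forces $\langle S,H\rangle=0$, and then $W_0+tH\in\mathcal D$ remains a minimizer for all $t$ in the maximal interval keeping it positive semidefinite. Taking $t$ to an endpoint of that interval strictly decreases the rank of $W_0$ (an eigenvalue hits zero), contradicting minimality of the rank among minimizers unless $r\le 1$. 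Since rank $0$ is impossible (it violates $\langle R,W\rangle=1$), the minimal rank among minimizers is $1$, giving the desired $W_{\mathrm{min}}$.

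To make this into a clean argument I would phrase it as: among all minimizers of $\rho$ on $\mathcal D$ (a nonempty set, as the minimum is attained), choose one, call it $W_{\mathrm{min}}$, of least rank $r$; then run the above perturbation argument to derive a contradiction if $r\ge 2$, concluding $r=1$. The one point requiring a little care is the claim that when $t$ reaches the boundary of the interval $\{t:W_0+tH\ge 0\}$ the rank genuinely drops: this holds because $W_0+tH$ restricted to $V:=\mathrm{range}(W_0)$ is, as a curve of symmetric operators on $V$ starting strictly positive definite, eventually singular in at least one direction (the set $\{t: (W_0+tH)|_V>0\}$ is a bounded open interval precisely because $H|_V\ne 0$ has a nonzero component either positive or negative definite part — if $H|_V$ were indefinite-free we still get boundedness from $\langle R,\cdot\rangle=1$ forcing boundedness of $W_0+tH$ itself), and at such a boundary value $W_0+tH\ge 0$ has a nontrivial kernel inside $V$, hence rank $<r$. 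That $\mathcal D$ is bounded (so $t$ cannot run to infinity) is exactly the compactness already established.

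The main obstacle I expect is precisely this boundedness-of-the-segment point when $H|_{V}$ is positive semidefinite (or negative semidefinite) rather than indefinite: then $W_0+tH\ge 0$ need not fail for all large $|t|$ of one sign \emph{within} $\mathrm{Sym}(l)$, but it must fail because $\langle R, W_0+tH\rangle = 1$ is violated once $t$ is large — wait, no: we arranged $\langle R,H\rangle=0$, so that constraint is preserved. The genuine resolution is that $\langle R,H\rangle=0$ with $R>0$ forces $H$ to be indefinite (or zero); since we chose $H\ne 0$, it has both a strictly positive and a strictly negative eigenvalue, so $\{t:W_0+tH\ge 0\}$ is automatically a bounded closed interval and the rank strictly drops at \emph{both} endpoints. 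Thus the only real content is the dimension count guaranteeing a nonzero such $H$ when $r\ge 2$, which is straightforward, and the elementary linear algebra above; I do not anticipate a deeper difficulty.
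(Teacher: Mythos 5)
Your proof is correct and follows essentially the same strategy as the paper's: take a minimizer, and if its rank $r\ge 2$, perturb along a symmetric direction $H$ supported on its range with $\langle R,H\rangle=\langle T,H\rangle=0$ (a dimension count guarantees such nonzero $H$), observe that $\rho$ becomes affine and hence constant along the resulting segment in $\mathcal D$, and conclude that the rank drops at the segment's endpoints, giving a lower-rank minimizer. The paper phrases this as an explicit rank-reduction iteration and pre-orthogonalizes $S,T$ against $R$ to simplify $\rho$; you instead take a minimizer of least rank and argue by contradiction, imposing $\langle T,H\rangle=0$ directly (equivalent, since $\langle R,\cdot\rangle$ is constant along the segment) — a minor stylistic variant of the same argument. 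Your observation that $\langle R,H\rangle=0$ with $R>0$ forces a nonzero symmetric $H$ (supported on $\mathrm{range}(W_0)$) to be indefinite is a nice explicit reason for the boundedness of the admissible interval; the paper obtains it implicitly from compactness of $\mathcal D$.
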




\begin{proof} Note that $\mathcal D$ is convex and compact (the second claim follows being $R$ positive definite), and that $\langle R,R\rangle>0$. Let
$$
S'=S-\frac{{\langle S,R\rangle}}{\langle R,R\rangle}R,\quad 
T'=T-\frac{{\langle T,R\rangle}}{\langle R,R\rangle}R.
$$
Clearly 
$$
\langle S',R\rangle=\langle T',R\rangle=0.
$$
Now, using that $\langle R,W\rangle=1$, $\rho(W)$ rewrites for $W\in{\mathcal D}$ as
\begin{eqnarray*}
\rho(W)&=&\langle S',W\rangle+\frac{\langle S,R\rangle}{\langle R,R\rangle}+\left[
\langle T',W\rangle+\frac{\langle T,R\rangle}{\langle R,R\rangle}\right]^2\\
&=&\langle S',W\rangle+a+\left[\langle T',W\rangle+b\right]^2,
\end{eqnarray*}
where $a$ and $b$ are constants. Hence, it suffices to prove the lemma for the function $\bar\rho$ defined by
$$
\bar\rho(W)=\langle S',W\rangle+\left[\langle T',W\rangle+b\right]^2,\quad W\in{\mathcal D}.
$$
As $\mathcal D$ is compact, there exists $W_0\in{\mathcal D}$, such that
\[
\bar\rho(W_0)=\min_{W\in\mathcal {D}}\bar\rho(W).
\]
Let $k$ be the rank of $W_0$, so $1\le k\le l$. If $k=1$, then the conclusion of the lemma is true. Hence consider the case $k\ge 2$. After a rotation, we may assume that the entries $w_{ij}$ of $W_0$ verify $w_{ij}=0$ for every $i>k$. We may also embed $\symm(k)$ in $\symm(l)$, with the convention that
\[
\symm(k)=\{W=(w_{ij})_{i,j=1,\dots,l}\in \symm(l) | w_{ij}=0, \forall i>k\}.
\]
Define
\[
{\mathcal D}_k=\{W\in \symm(k)\colon\langle R,W\rangle=1, W\ge0\}.
\]
Note that ${\mathcal D}_k\subset{\mathcal D}$ is convex and compact. Let $\mathcal H$ be the subspace of $\symm(k)$ defined by
$$
{\mathcal H}=\{Y\in \symm(k)\colon \langle Y,R\rangle=0\}.
$$
In particular, $T'\in {\mathcal H}$; moreover, ${\mathcal H}$ has dimension 
$$
\frac{k(k+1)}{2}-1\ge2,
$$
as $k\ge2$. Hence there exists $\bar T\in{\mathcal H}$ such that 
$$
\langle T',\bar T\rangle=0,\quad \langle \bar T,\bar T\rangle=1.
$$
As $\bar T\in{\mathcal H}$, the points of the form $W=W_0+x\bar T$, for $x\in\R$, verify $\langle W,R\rangle =1$. Let
$$
L=\{W\in \symm(k)\colon W=W_0+x\bar T\}\quad\mbox{and}\quad \Sigma=L\cap{\mathcal D}.
$$
$L$ is a straight line in $\symm(l)$, lying on the same hyperplane as ${\mathcal D}_k$, and by the convexity of ${\mathcal D}_k$, $\Sigma$ is a segment containing $W_0$ in its interior (because the rank of $W_0$ is $k$), that is
$$
\Sigma=\{W_0+x\bar T\colon x\in[-c,d]\}
$$
for some $c,d>0$. In particular $W_0-c\bar T$ and $W_0+d\bar T$ belong to the relative boundary of ${\mathcal D}_k$ (meaning that they are points of ${\mathcal D}_k$ not belonging to the relative interior of ${\mathcal D}_k$). Consider the function $\gamma\colon[-c,d]\to\R$ defined by
\begin{eqnarray*}
\gamma(x)&=&\bar\rho(W_0+x\bar T)=\langle W_0+x\bar T, S'\rangle+\left[\langle W_0+x\bar T,T'\rangle +b\right]^2\\
&=&\langle W_0+x\bar T, S'\rangle+\left[\langle W_0,T'\rangle +b\right]^2.
\end{eqnarray*}
Thus, $\gamma$ is a linear function with a minimum point at $x=0$. Therefore $\gamma$ is constant, which shows that the minimum of $\bar\rho$ is attained (also) on the relative boundary of ${\mathcal D}_k$. In other words, there exists $\tilde W_0\in {\mathcal D}_{k-1}$ such that
\[
\bar\rho(\tilde W_0)=\min_{W\in {\mathcal D}}\bar\rho(W).
\]
Iterating the previous argument, we deduce that the minimum is attained on matrices with rank one.
\end{proof}

\begin{proof}[Proof of Theorem \ref{main theorem}.] As $f>0$, we may assume that $f\equiv1$ on $D$, up to passing from the tensor $A$ to 
$$
\frac{A}f.
$$

\medskip

Let $l\in\{1,\dots,n-1\}$ be the minimal rank of the matrix $W$ on $D$, attained at some point $x_0\in D$. Now fix any point $z$ in a sufficiently small neighborhood $D'$ of $x_0$ and call $\ell$ the rank of $W$ at $z$. We have $\ell\geq l$. We will consider $W=W(z)$ as an $\ell\times\ell$ matrix, with the convention that $w_{ij}=0$, for every $i>\ell$. We set 
$$
(w^{jk})_{j,k=1,\dots,\ell}=W^{-1},
$$
if no ambiguity may occur. The next calculation will be performed at the point $z$.

We will prove that, in $D'$, for every $i\in B$, 
\begin{equation}\label{formula 5}
\sum_{\alpha,\beta=1}^{\ell} a_{\alpha\beta ii}w_{\alpha\beta}+2\sum_{\alpha,\beta=1}^\ell\left(
a_{\alpha\beta i}w_{\alpha\beta i}+\sum_{j,k=1}^{\ell} a_{\alpha\beta}w^{jk}w_{\alpha ki}w_{\beta ji}
\right)\gtrsim -1.
\end{equation}
By (\ref{equation seven}), with $f\equiv 1$, the previous inequality implies that 
\begin{equation}\label{added 10}
\sum_{\alpha,\beta=1}^na_{\alpha\beta}\phi_{\alpha\beta}\lesssim0.
\end{equation}
As a consequence of \eqref{added 10} and the strong minimum principle, $\phi\equiv0$ in $D$, and the first conclusion of Theorem \ref{main theorem} will follow.

\bigskip

Let us fix $i\in B=\{\ell+1,\dots,n\}$, and set
$$
\Gamma_i:=\sum_{\alpha,\beta=1}^\ell a_{\alpha\beta ii}w_{\alpha\beta}+2\sum_{\alpha,\beta=1}^\ell
\left(a_{\alpha\beta i}w_{\alpha\beta i}+\sum_{j,k=1}^\ell a_{\alpha\beta}w^{jk}w_{\alpha ki}w_{\beta ji}
\right).
$$
In the rest of the proof, for simplicity we use $=$ for $\sim$. The following relations hold:
\begin{equation}\label{constraints}
\sum_{\alpha,\beta=1}^\ell a_{\alpha\beta} w_{\alpha\beta}=1,\quad\sum_{\alpha,\beta=1}^\ell a_{\alpha\beta i}w_{\alpha\beta}= -\sum_{\alpha,\beta=1}^\ell a_{\alpha\beta}w_{\alpha\beta i}.
\end{equation}
Here we used the fact that, by Remark \ref{newsim}, $w_{\alpha\beta i}\sim 0$ if at least one of $\alpha, \beta$ is in $B$. We set:
\begin{eqnarray*}
A=(a_{\alpha\beta})_{\alpha,\beta=1}^\ell,\; W=(w_{\alpha\beta})_{\alpha,\beta=1}^\ell,\; A'=(a_{\alpha\beta i})_{\alpha,\beta=1}^\ell,\;A''=(a_{\alpha\beta ii})_{\alpha,\beta=1}^\ell,\; W'=(w_{\alpha\beta i})_{\alpha,\beta=1}^\ell;
\end{eqnarray*}
note that $W'$ is symmetric. Then $\Gamma_i$ can be written in the form
\begin{equation}\label{bilinear}
\Gamma_i(W,W')=\langle A'', W\rangle+2\left[\langle A',W'\rangle+\langle W',\Theta W'\rangle\right],
\end{equation}
where
$$
\Theta W'=W^{-1}W' A,
$$
and the equalities in \eqref{constraints} become
\begin{equation}\label{dubbio}
\langle A,W\rangle=1,\quad \langle A',W\rangle=-\langle A,W'\rangle.
\end{equation}

In the above constraints, $=$ should be intended as $\sim$, in the sense that the right hand side of the first one should be $1+a(z)$, with $a(z)\sim 0$, and in the right hand side of the second one we should add a term $b(z)$, with $b(z)\sim 0$. However, notice that this will not affect the conclusion, which is that we can calculate the minimum of a certain functional $\tilde\Gamma_i$ over a certain subset $\mathcal C$ of $\symm(\ell)\times\symm(\ell)$ at a matrix of rank 1, so that we get $\Gamma_i\gtrsim -1$, that is, \eqref{formula 5}. The set $\mathcal C$ should indeed depend on $z$, via $a(z)$ and $b(z)$, but the conclusion does not change, for every fixed $z$, as desired.

Now, with a slight abuse of notation, we consider two fixed arbitrary matrices $A$ and $A'$ in $\M_{\ell\times\ell}$, such that $A$ is symmetric and positive definite, and we define
\begin{equation}\label{constraint}
{\mathcal C}=\{(W,Y)\in\symm(\ell)\times\symm(\ell)\colon W\ge0,\ \langle A,W\rangle=1,\ \langle A',W\rangle=-\langle A,Y\rangle\}.
\end{equation}
Note that if $(W,Y)\in{\mathcal C}$ and $W$ is positive definite, then $(W,Y)$ belongs to the relative interior of $\mathcal C$. We consider now the function $\hat\Gamma_i\colon{\mathcal C}\to\R$, defined by:
\begin{equation}\label{bilinear2}
\hat\Gamma_i(W,Y)=\langle A'', W\rangle+2\left[\langle A',Y\rangle+\langle Y,\Theta Y\rangle\right],
\end{equation}
where $\Theta\colon\symm(\ell)\to\symm(\ell)$ is the linear map defined by
$$
\Theta P=W^{-1}PA,\quad\forall\, P\in\symm(\ell).
$$
The scope of this part of the proof is to minimize $\hat \Gamma_i$ over $\mathcal C$. If we identify $\M_{\ell\times\ell}$ with $\R^{\ell^2}$, $\langle P,Q\rangle$ is the canonical inner product in $\R^{\ell^2}$. Adopting this point of view, $\Theta$ can be seen as a positive definite matrix, to which we associate a linear operator from $\R^{\ell^2}$ to $\R^{\ell^2}$, and
\begin{equation}\label{theta -1}
\Theta^{-1}Q=WQA^{-1},\quad\forall\ Q\in\R^{\ell^2}.
\end{equation}
Let $(W_m, Y_m)$, $m\in\N$, be a minimizing sequence for $\hat \Gamma_i$ on $\mathcal C$, such that $W_m>0$ for every $m$. As $\Theta$ depends on $W$, we denote by $\Theta_m$ the operator corresponding to $W_m$.

It follows from the expression of $\hat \Gamma_i$ that, for each fixed $m$, the function
$$
X\rightarrow\hat \Gamma_i (W_m,X),
$$
defined on the set $\{X\in\M_{\ell\times\ell}\colon (W_m,X)\in \mathcal{C}\}$, 
has a unique minimum point $X_m$. 
By the Lagrange multipliers theorem, there exists $\lambda\in\R$ such that
$$
A'+2\Theta_m X_m=\lambda A\quad\Rightarrow\quad2\Theta_m X_m=\lambda A-A'.
$$
Therefore, using the definition of $\Theta^{-1}$,
\begin{eqnarray*}
X_m&=&\frac12\Theta_m^{-1}(\lambda A-A')\\
&=&\frac12W_m(\lambda A-A')A^{-1}\\
&=&\frac12\lambda W_m-\frac12W_mA'A^{-1}.
\end{eqnarray*}
As, by the definition of $\mathcal{C}$,
$$
\langle A,X_m\rangle=-\langle W_m,A'\rangle\quad\mbox{and}\quad\langle A,W_m\rangle =1\quad\forall\, m,
$$
we obtain
$$
-\langle W_m,A'\rangle=\frac12\lambda-\frac12\langle A,W_mA'A^{-1}\rangle.
$$
On the other hand, since $A$ and $W_m$ are symmetric, we find
$$
\langle A,W_mA'A^{-1}\rangle=\mbox{tr}(AW_mA'A^{-1})=\mbox{tr}(W_m A')=\langle W_m,A'\rangle.
$$
Therefore, $\lambda=-\langle W_m,A'\rangle$ so that
$$
X_m=-\frac12\langle W_m, A'\rangle W_m-\frac12W_mA'A^{-1}=-\frac12\langle W_m, A'\rangle W_m-\frac12\Theta_m^{-1}A'.
$$



By \eqref{bilinear2},
\begin{equation}\label{bilinear 2}
\hat\Gamma_i(W_m,X_m)=\langle A'',W_m\rangle+2\bar \Gamma_i(W_m,X_m)
\end{equation} 
where
$$
\bar \Gamma_i(W_m,X_m)=\langle A',X_m\rangle+\langle X_m,\Theta X_m\rangle.
$$
We compute now:
\begin{eqnarray}\label{e:Gammahat WmXm}
\bar \Gamma_i(W_m,X_m)&=&-\frac12\langle A',\langle W_m,A'\rangle W_m\rangle-\frac12\langle A',\Theta_m^{-1}A'\rangle\notag\\
&&+\frac14\langle\langle W_m,A'\rangle W_m+\Theta_m^{-1}A',\Theta_m\left(\langle W_m, A'\rangle W_m+\Theta_m^{-1}A'\right)\rangle\notag\\
&=&-\frac12\langle A',W_m\rangle^2-\frac12\langle A',\Theta_m^{-1}A'\rangle
+\frac14\langle\langle W_m,A'\rangle W_m+\Theta_m^{-1}A',\langle W_m, A'\rangle\Theta_m W_m+A'\rangle\notag\\
&=&-\frac12\langle A',W_m\rangle^2-\frac12\langle A',\Theta_m^{-1}A'\rangle
+\frac14\langle\langle W_m,A'\rangle W_m+\Theta_m^{-1}A',\langle W_m, A'\rangle A+A'\rangle\notag\\
&=&-\frac12\langle A',W_m\rangle^2-\frac12\langle A',\Theta_m^{-1}A'\rangle\notag\\
&&+\frac14(\langle A',W_m\rangle^2\langle A,W_m\rangle+\langle A',W_m\rangle^2
+\langle\Theta_m^{-1}A',A\rangle\langle A',W_m\rangle+\langle\Theta_m^{-1}A',A'\rangle)\notag\\
&=&\frac14\langle A',W_m\rangle^2-\frac14\langle A',\Theta_m^{-1}A'\rangle\,
\end{eqnarray}
where in the last equality we have used that $\langle W_m,A\rangle=1$ and that $\langle\Theta_m^{-1}A',A\rangle=\langle W_m,A'\rangle$. Define
\begin{eqnarray}\label{bilinear 3}
\tilde\Gamma_i(W)&=&\langle A'',W\rangle+\frac12\langle A',W\rangle^2
-\frac12\langle A',\Theta^{-1}A'\rangle\\
&=&\langle A'',W\rangle+\frac12\langle A',W\rangle^2-\frac12\langle A', WA'A^{-1}\rangle.
\end{eqnarray} 
Note that, by \eqref{bilinear 2} and \eqref{e:Gammahat WmXm} and the definition of $X_m$, we have $\tilde \Gamma_i(W_m)=\hat \Gamma_i(W_m, X_m)\le \hat \Gamma_i(W_m, Y)$, for every $Y\in \{X\in\M_{\ell\times\ell}\colon (W_m,X)\in \mathcal{C}\}$. We want to find the minimum of $\tilde \Gamma_i(W)$ under the constraint
\[
\langle A,W\rangle=1, \ W\ge 0.
\]
If $\ell\ge2$, by Lemma \ref{rank-reduction}, the minimum of $\tilde \Gamma_i$ is attained at some $W\ge 0$ with rank $1$. After a proper rotation, we may assume $W$ is diagonal with $w_{11}>0$ and $w_{jj}=0$ for every $j\ge 2$. From the constraint $\langle W, A\rangle =1$, we deduce
\[
w_{11}=\frac{1}{a_{11}}\,,
\]
whence
\begin{equation}\label{cond-ll} 
\min_{\langle A,W\rangle=1, \ W\ge 0}\tilde \Gamma_i(W)=\frac{a_{11}''}{a_{11}}+\frac12\left(\frac{a'_{11}}{a_{11}}\right)^2-\frac12\sum_{i,j=1}^\ell \frac{a^{ij}a'_{1i}a'_{1j}}{a_{11}},
\end{equation}
where
$$
(a^{ij})_{i,j=1,\dots,n}=A^{-1}.
$$
Therefore, by the assumption in \eqref{cond-l} we conclude \eqref{formula 5}.

On the other hand, if $\ell=1$, then, obviously, as before the minimum of $\tilde \Gamma_1$ is attained at some $W\ge 0$ with rank $1$. After a proper rotation, we may assume $W$ is diagonal with $w_{11}>0$. Hence
\begin{equation}\label{cond-ll-bis} 
\min_{\langle A,W\rangle=1, \ W\ge 0}\tilde \Gamma_i(W)=\frac{a_{11}''}{a_{11}}+\frac12\left(\frac{a'_{11}}{a_{11}}\right)^2-\frac12\sum_{i,j=1}^1 \frac{a^{ij}a'_{1i}a'_{1j}}{a_{11}},
\end{equation}
and we conclude \eqref{formula 5} as above by means of \eqref{cond-l}. In particular, note that in this case
$$
\left(\frac{a'_{11}}{a_{11}}\right)^2-\sum_{i,j=1}^1 \frac{a^{ij}a'_{1i}a'_{1j}}{a_{11}}=0.
$$
This concludes the proof in the case of a general subset $D\subset\sfe$.

\medskip

We now consider the case $D=\sfe$ and we show that constant rank of $W$ on $\mathbb S^n$ implies that $W$ has full rank, i.e., $W>0$ on $\sfe$. To this aim, we assume by contradiction that the (constant) rank is $l<n$; we know that $l\ge 1$. Thus, 
\[
\sigma_{l}(W(x))=:q(x)>0, \quad \sigma_{l+1}(W(x))=0, \quad \forall x\in \mathbb S^n.
\] 
Let $\delta_0>0$ be such that
\[
q(x)\ge \delta_0, \quad \forall x\in \mathbb S^n.
\]
Let $\Omega$ be the convex body with support function $u$. We may assume $u\ge 0$ by moving the Steiner point of $\Omega$ to the origin. By Minkowski identities (see \cite[Section 5.3.1]{Schneider}), there exists $C_{n,l}>0$, such that
\begin{eqnarray*} 0&=&  C_{n,l} \int_{\mathbb S^n}\sigma_{l+1}(W(x))dx \\
&=& \int_{\mathbb S^n}u\sigma_{l}(W(x))dx\\
&\ge & \int_{\mathbb S^n}u q(x)dx\ge \delta_0 \int_{\mathbb S^n}udx. 
\end{eqnarray*}
As $u\ge 0$, we deduce $u\equiv 0$, that is, a contradiction.
\end{proof}

\begin{remark} 
Let $A(x)=(a_{ij}(x))\in C^{2,\gamma}(\s^n)$ be a positive $(2,0)$-tensor, for some $\alpha\in(0,1)$, and let $h\in C^{2,\alpha}(\sfe)$, $h>0$. Then the quantity
\begin{equation*}
\left(\frac{a'_{11}}{a_{11}}\right)^2-\sum_{i,j=1}^l \frac{a^{i,j}a'_{1i}a'_{1j}}{a_{11}}
\end{equation*} 
is invariant if we pass from $A$ to $hA$.  
In particular if the $1$-homogeneous extension of $A$ to $\R^n$ is convex, and if
\begin{equation*}
\left(\frac{a'_{11}}{a_{11}}\right)^2-\sum_{i,j=1}^l \frac{a^{i,j}a'_{1i}a'_{1j}}{a_{11}}\le0\quad\mbox{on $\sfe$,}
\end{equation*} 
then the assumptions of Theorem \ref{main theorem} are verified by $hA$, for every $h>0$.
\end{remark}

\begin{remark} In the notations of the proof of Theorem \ref{main theorem}, if $n=2$ and $l=1$, then  
\begin{equation}
\left(\frac{a'_{11}}{a_{11}}\right)^2-\sum_{i,j=1}^1 \frac{a^{ij}a'_{1i}a'_{1j}}{a_{11}}=0.
\end{equation}
In general, 
\begin{equation}
\left(\frac{a'_{11}}{a_{11}}\right)^2-\sum_{i,j=1}^l \frac{a^{ij}a'_{1i}a'_{1j}}{a_{11}}=0
\end{equation} 
if $A'=\lambda A$ for some $\lambda\in \mathbb R$.

\end{remark}
\begin{remark} 
In general, for $l\ge 2$, if $WA=AW, \ W>0$, 
\[\langle A',W\rangle^2-\langle A', WA'A^{-1}\rangle\le 0,\] with equality holds if and only if $A'=\lambda A$ for some $\lambda\in \mathbb R$. To see this, we first observe that for any matrix $B$, 
\begin{equation}\label{obT}
\langle B, B\rangle=\langle B, B^T\rangle 
\end{equation} 
where $B^T$ the transpose of $B$. Write 
\[A=A^{\frac12}A^{\frac12}, \ W=W^{\frac12}W^{\frac12}.\] As $\langle A, W\rangle =1$, by (\ref{obT})
\[\langle A^{\frac12}W^{\frac12}, A^{\frac12}W^{\frac12}\rangle =1.\]We also have  
\[\langle A',WA'A^{-1}\rangle =\langle W^{\frac12}A'A^{-\frac12}, W^{\frac12}A'A^{-\frac12}\rangle,\] and
\[\langle A',W\rangle =\langle A^{\frac12}W^{\frac12}, W^{\frac12}A'A^{-\frac12}\rangle.\]

By Cauchy-Schwarz inequality,
\[\langle A',W\rangle^2\le \langle A^{\frac12}W^{\frac12}, A^{\frac12}W^{\frac12}\rangle \langle W^{\frac12}A'A^{-\frac12}, W^{\frac12}A'A^{-\frac12}\rangle=\langle W^{\frac12}A'A^{-\frac12}, W^{\frac12}A'A^{-\frac12}\rangle, \] with equality holds if and only if 
\[W^{\frac12}A'A^{-\frac12}=\lambda A^{\frac12}W^{\frac12} \] for some $\lambda\in \mathbb R$. As $WA=AW$, we get $A'=\lambda A$. This is satisfied if $l=1$.
\end{remark}

\section{Proof of Corollaries \ref{the corollary} and \ref{the corollary nge3}}\label{final section}
As stated in the introduction, we exploit a continuity argument. Let $f$ and $A$ be as in the statement of either Corollary \ref{the corollary} or Corollary \ref{the corollary nge3}. Let
$$
A_0\equiv I_n,\quad A_1=A,
$$ 
where $I_n$ is the identity matrix. Moreover, for $t\in(0,1)$, let
$$
A_t=(1-t)A_0+tA_1.
$$
Note that, for every $t\in[0,1]$, $A_t$ and $f_t$ verify the assumptions of Theorem \ref{main theorem}. For $n\ge3$ this is directly assumed in Corollary \ref{the corollary nge3}. For $n=2$ this is due to the fact that condition \eqref{cond-1 2} is linear with respect to $A$, and the condition is verified by the pair $f$, $A$, and by the pair $f$, $I_n$ (here the assumption that condition \eqref{GM condition} holds for $f$ is used -- see Remark \ref{remark}). 

For $t\in[0,1]$ let $u_t$ be the solution of equation \eqref{the equation}, with $A=A_t$ and $f=f_t$, and let 
\[
W_t=((u_t)_{ij}+u_t\delta_{ij})_{i,j=1,\dots,n}\ge 0,\quad t\in[0,1].
\] 
We consider the set
$$
{\mathcal T}=\{t\in[0,1]\colon W_t>0\,\mbox{on $\sfe$}\}.
$$
By Theorem \ref{GM thm}, $0\in{\mathcal T}$, and in particular $\mathcal T\ne\emptyset$. Moreover, the condition $W_t>0$ implies that  $\mathcal T$ is open. On the other hand, Theorem \ref{main theorem} ensures that $\mathcal T$ is closed. We deduce that ${\mathcal T}=[0,1]$, and this concludes the proof.

\end{document}